\makeatletter \@addtoreset{equation}{section}
\newtheorem{thm}{Theorem}[section]
\newtheorem{coro}[thm]{Corollary}
\newtheorem{prop}[thm]{Proposition}
\newtheorem{lem}[thm]{Lemma}
\theoremstyle{definition}
\newtheorem{defn}[thm]{Definition}
\newtheorem{rem}[thm]{Remark}
\newtheorem{example}[thm]{Example}
\newtheorem{assumption}[thm]{Assumption}
\def\U{\ensuremath {\mathcal U}}
\def\S{\ensuremath {\mathcal S}}
\def\G{\ensuremath {\mathcal G}}
\def\B{\ensuremath {\mathcal B}}
\def\R{\ensuremath {\mathbb R}}
\def\C{\ensuremath {\mathbb C}}
\def\one{{\hbox{1{\kern -0.35em}1}}}
\newcommand{\e}{\varepsilon}
\newcommand{\al}{\alpha}
\newcommand{\ga}{\gamma}
\newcommand{\N}{\ensuremath{\mathbb N}}
\renewcommand{\P}{\ensuremath{\mathbb P}}
\newcommand{\E}{\ensuremath{\mathbb E}}
\newcommand{\EE}{\ensuremath{\mathcal E}}
\newcommand{\F}{\ensuremath{\mathcal F}}
\newcommand{\M}{\ensuremath{\mathcal M}}
\newcommand{\A}{\ensuremath{\mathcal A}}
\renewcommand{\L}{\ensuremath{\mathcal{L}}}
\newcommand{\op}{\ensuremath{\mathcal{L}}}
\newcommand{\wdt}{\widetilde}
\newcommand{\supkd}{\sup_{(k-1)\delta \le t \le k\delta}}
\newcommand{\kdel}{(k-1)\delta}
\newcommand{\set}[1]{\left\{#1\right\}}
\newcommand{\abs}[1]{\left\vert#1\right\vert}
\renewcommand{\d}{\mathrm{d}}
\newcommand{\cd}{(\cdot)}
\renewcommand{\(}{\left(}
\renewcommand{\)}{\right)}
\newcommand{\lan}{\langle}
\newcommand{\ran}{\rangle}
\def\tr{\ensuremath {\mbox{tr}}}
\title{Almost Sure and  Moment Exponential  Stability of Regime-Switching Jump Diffusions}
\author{Zhen Chao\thanks{Department of Mathematics, Shanghai Key Laboratory of Pure Mathematics and Mathematical Practice, East China Normal University, 500 Dongchuan Road, Shanghai, 200241, China, Email: {\sf zhenchao1120@163.com}.},   \quad
  Kai Wang\thanks{Department of Applied Mathematics, Anhui University of Finance and Economics, Bengbu 233030, China, Email: {\sf wangkai050318@163.com}.}, 
\quad  Chao Zhu\thanks{Department of Mathematical Sciences, University of Wisconsin-Milwaukee, Milwaukee, WI 53201, Email: {\sf zhu@uwm.edu}.},\quad and 
\quad  Yanling Zhu\thanks{School of International Trade and  Economics, University of International Business and Economics, Beijing 100029, China, Email: {\sf zhuyanling99@126.com}.}}
\begin{document}
\maketitle

\begin{abstract}

This work is  devoted to  almost sure and moment exponential stability of regime-switching jump diffusions. The Lyapunov function method is used to derive sufficient conditions for stabilities for general nonlinear systems;  which further helps to derive easily verifiable conditions for linear systems.  For one-dimensional linear regime-switching jump diffusions,  necessary and sufficient conditions for almost sure and $p$th moment exponential stabilities are presented. Several examples are provided for illustration.

 \bigskip
\noindent {\sc Keywords.} Regime-switching jump diffusion, almost sure exponential stability, $p$th moment exponential stability,  Lyapunov exponent,  Poisson random measure.

 \bigskip\noindent
 {\sc Mathematics Subject Classification.} 60J60, 60J75, 47D08.
\end{abstract}

\setlength{\baselineskip}{0.20in}
\section{Introduction}\label{sect-introduction}
Applications of  stochastic analysis 
 have
emerged in various areas such as financial
engineering,  wireless communications, mathematical biology, and risk management.
One of the salient features of   such systems is 
  the coexistence of  and correlation between  continuous  dynamics and
discrete events. Often, the trajectories of these systems are not continuous: there is day-to-day jitter that causes minor fluctuations  as well as big jumps
 caused by rare events arising from,
e.g., epidemics, earthquakes, tsunamis, or terrorist atrocities.
On the other hand, the systems often display qualitative changes. For example, as demonstrated in \cite{BW87}, the volatility and  the expected rate of return  of an asset are markedly
different
in the bull and bear markets.
 Regime-switching diffusion  with L\'evy type jumps
naturally captures  these inherent features of these systems:
the L\'evy   jumps are well-known to incorporate both small and big jumps (\cite{APPLEBAUM,Cont-Tankov})
while the regime switching mechanisms provide the qualitative changes of the environment (\cite{MaoY,YZ-10}).
In other words,  regime-switching diffusion  with L\'evy jumps
provides a  uniform and realistic
 yet mathematically tractable
 platform in modeling a wide range of applications.
Consequently  increasing
attention has been drawn to the study of
 regime-switching jump diffusions in recent years. Some recent work in this vein can be found in \cite{Xi-09,Zong-14,Yin-Xi-10,ShaoX-14,ZhuYB-15} and the references therein.

 Regime-switching jump diffusion  processes can be viewed as jump diffusion processes in random environments, in  which the evolution of the random  environments is modeled by  a continuous-time Markov chain or more generally, a continuous-state-dependent switching process with a discrete state space. Seemingly similar to the usual jump diffusion processes, the behaviors of
  regime-switching jump diffusion  processes can be markedly  different. For example, \cite[Section 5.6]{YinZ13}  illustrates that two stable diffusion processes can be combined via a continuous-time Markov chain to produce an unstable regime-switching diffusion process.  See also  \cite{Costa-13} for similar observations.

 This paper aims to investigate almost sure and moment exponential stability for regime-switching  diffusions  with L\'evy type jumps.  
This is 
 motivated by the recent advances in the investigations of stability of regime-switching jump diffusions in \cite{Yin-Xi-10,Zong-14} and the references therein. In  \cite{Yin-Xi-10,Zong-14}, the L\'evy measure $\nu$ on some measure space $(U,\mathfrak U)$ is assumed to be a finite measure with $\nu(U) < \infty$. Consequently,  in these models,  the jump mechanism is modeled by compound Poisson processes and there are  finitely many jumps in any finite time interval. 
In contrast, 
in our formulation, the L\'evy measure $\nu$ on $(\R^{n}-\{0\}, \B(\R^{n}-\{0\}))$ merely satisfies  $\int_{\R^{n}-\{0\}} (1\wedge |z|^{2})\nu(\d z)< \infty$ and hence it is not necessarily finite. 
This formulation allows the possibility of infinite number of ``small jumps'' in a finite time interval.  
Indeed, 
such   ``infinite activity models'' are studied in the finance literature, such as the variance gamma model in \cite{Seneta-04} and the normal inverse Gaussian model in \cite{Barn-98}. See also the recent paper  \cite{Barn-13} for     energy spot price modeling using L\'evy processes. 

Our focus of this paper is to study  almost sure and   moment exponential stabilities of the equilibrium point  $x=0$ of   regime-switching jump diffusion processes. To this end, we first observe the ``nonzero'' property, which asserts that almost all sample paths of all solutions to  \eqref{sw-jump-diffusion} starting from a nonzero initial condition will never reach the origin with probability one. This phenomenon was first established for diffusion processes in \cite{khasminskii-12} and later extended to regime-switching diffusions in \cite{MaoY,YZ-10} under the usual Lipschitz and linear growth conditions. For processes with L\'evy type jumps, additional assumptions are needed to handle the jumps to obtain the ``nonzero'' property. For instance,   \cite{ApplS-09} and \cite{Wee-99} contain different sufficient conditions. The  differences are essentially on the assumptions concerning the jumps. Here we propose a different sufficient condition than those in  \cite{ApplS-09,Wee-99} for the ``nonzero'' property for regime-switching jump diffusion.  We show in Lemma \ref{lem-0-inaccessible} that  the ``nonzero'' property holds under the usual Lipschitz and linear growth conditions on the coefficients of \eqref{sw-jump-diffusion}  together with  Assumption \ref{assump-jump}. Note that it is quite easy to verify Assumption \ref{assump-jump} in many practical situations; see, for example, the discussions in Remark \ref{rem-concerning-assumption-jump}. 

With the  ``nonzero'' property at our hands, we proceed to obtain sufficient conditions for almost sure and $p$th moment exponential stabilities of the equilibrium point of  nonlinear   regime-switching jump diffusions.  
Similar to the related results in \cite{ApplS-09} for jump diffusions, these sufficient conditions for stability are expressed in terms of the existence of appropriate Lyapunov functions. The details are spelled out in Theorems \ref{thm-as-exp-stable} and \ref{thm-moment-stab}, and Corollary \ref{coro-as-exp-stable}. Also, as observed  in \cite{Costa-13,YinZ13,YZ-10} for   regime-switching diffusions, our results demonstrate that the switching mechanism can contribute to the  stabilization or destabilization of jump diffusion processes.  
 Next we  show in Theorem \ref{thm-moment-as-stable} that $p$th ($p\ge 2$) moment exponential stability implies almost sure exponential stability for regime-switching jump diffusions   under a certain integrability  condition on the jump term.  Such a result has been established for  diffusions in \cite{khasminskii-12}, jump diffusions in \cite{ApplS-09}, and regime-switching diffusions in \cite{MaoY}. In addition,    we   derive  a sufficient condition for $p$th moment exponential stability using $M$-matrices in Theorem \ref{thm-moment-stab-m-matrix}.

The aforementioned general results  are then applied to treat linear 
regime-switching jump diffusions. For one-dimensional systems, we obtain necessary and sufficient conditions for almost sure and $p$th moment exponential stabilities in Propositions \ref{prop-as-exp-stable-1d} and \ref{prop-moment-exp-stab-1d}, respectively. For the multidimensional system, we present   verifiable sufficient conditions 
 for almost sure and moment exponential  stability in Propositions \ref{prop-exp-stab-creterion-b}, \ref{cor-p-stab}, and  \ref{prop-as-moment-stab-M-matrix}. To illustrate the results, we also study several examples in Section \ref{sect:examples}.

The remainder of the paper is organized as follows. After a brief introduction to regime-switching jump diffusion processes in Section \ref{sect-formulation}, we proceed to deriving sufficient conditions for almost sure and $p$th moment exponential stabilities of the equilibrium point of the nonlinear system \eqref{sw-jump-diffusion}  in Section \ref{sect:nonlinear}.
Section \ref{sect-linear} treats stability of the equilibrium point of linear systems.
Finally we conclude the paper with conclusions and remarks in Section \ref{sect-conclusion}.

To facilitate the
 presentation, we introduce some notation
 that will be used often in later sections.
   Throughout the paper, we use
   $x'$  to denote the transpose of $x$, and
 $x'y$  or $x\cdot y$ interchangeably to denote the inner product of
   the vectors $x$ and $y$. If
   $A$ is a vector or matrix, then $|A| : = \sqrt{\tr(AA')}$, $\| A\|:= \sup\{|Ax|: x\in \R^{n}, |x| =1\}$, and $A\gg 0$ means that every element of $A$ is positive.
   For a square matrix $A $,  $\rho(A)$ is the spectral radius of $A$. Moreover if $A$ is a symmetric square matrix, then $\lambda_{\max}(A) $ and $\lambda_{\min}(A)$ denote the maximum and minimum eigenvalues of $A$, respectively.
     For sufficiently smooth function $\phi: \R^n \to \R$, $D_{x_i} \phi= \frac{\partial \phi}{\partial x_i}$,
    $D_{x_ix_j} \phi= \frac{\partial^2 \phi}{\partial x_i\partial x_j}$, and we denote by $D\phi   =(D_{x_1}\phi, \dots, D_{x_n}\phi)'\in \R^{n}$ and $D^2\phi =(D_{x_ix_j}\phi) \in \R^{n\times n}$ the gradient and  Hessian of $\phi$, respectively.
    For $k \in \mathbb N$, $C^{k}(\R^{n})$ is the collection of functions $f: \R^{n }\mapsto \R$
    with continuous partial derivatives up to the $k$th order while  $C^{k}_{c} (\R^{n})$ denotes the space of $C^{k}$ functions with compact support.
 If $B $ is a set, we use $B^o$ and $I_B$ to denote the interior and indicator function of $B$, respectively. Throughout the paper, we adopt the conventions that $\sup \emptyset =-\infty$
 and $\inf \emptyset = + \infty$.

\section{Formulation}\label{sect-formulation}

Let  $(\Omega, \F, \set{\F_{t}}_{t\ge 0}, \P)$ be  a filtered probability space   satisfying the usual condition on
which is defined  an $n$-dimensional standard $\F_t$-adapted Brownian motion   $W\cd$.
Let $\set{\psi(\cdot) }$ be an $\F_t$-adapted   L\'evy process with L\'evy measure $\nu(\cdot)$.
Denote  by  $N(\cdot,\cdot)$
     the corresponding $\F_t$-adapted Poisson random measure defined on $\R_+ \times \R^n_0$: $$ N(t,U):= \sum_{0 < s \le t}I_{U}( \Delta \psi_{s} )= \sum_{0< s \le t} I_{U}(\psi(s)- \psi(s-)),$$
     where $t \ge 0$ and $U $ is a Borel subset of $\R^{n}_{0}=\R^{n}-\set{0}$.  
 The compensator $\tilde N$ of $N$ is given by \begin{equation}
\label{eq-N-tilde}
\tilde N(\d t,\d z):= N(\d t,\d z) - \nu(\d z)\d t.
\end{equation}  Assume that $W\cd$ and $N(\cdot,\cdot)$ are independent and that
$\nu\cd$ is a  
 L\'evy measure satisfying
 \begin{equation}
\label{levy-measure} \int_{\R^n_0} (1 \wedge \abs{z}^2)\nu(\d z) < \infty,
\end{equation} where  $a_1\wedge a_2 =\min \{a_1, a_2\}$ for  $a_1,a_2 \in \R $.

 We consider a stochastic differential equation with regime-switching together with L\'evy-type jumps of the form
\begin{equation}\label{sw-jump-diffusion} \begin{aligned}
 \d X(t) = & \, b(X(t ),\al(t))\d t + \sigma(X(t ),\al(t ))\d W(t)  \\
    & \  + \int_{\R ^n_0} \gamma(X(t-),\al(t-),z)\tilde N(\d t,\d z),
  \ \ t \ge   0,
  \end{aligned} \end{equation}
  with initial conditions
\begin{equation}\label{swjd-initial}
 X(0)=x_0 \in \R ^n, \ \ \al(0) = \al_0\in \M,  \end{equation}
where $b(\cdot,\cdot) : \R ^n \times \M \mapsto \R ^{n}$,
$\sigma (\cdot,\cdot): \R ^n\times \M \mapsto \R ^{n\times n}$, and $ \gamma (\cdot,\cdot,\cdot): \R ^n \times \M \times \R ^n_0
\mapsto  \R ^{n}$ are measurable functions, and
   $\al\cd$ is a  
switching component   with  a finite state space $\M:=\set{1, \dots, m}$ and infinitesimal generator $Q= (q_{ij}(x))\in \R^{m\times m}$. That is, $\al\cd$ satisfies
\begin{equation}\label{Q-gen}\P\set{\al(t+ \delta)=j| X(t) =x,
\al(t)=i,\al(s),s\le t}=\begin{cases}q_{ij}(x)
\delta + o(\delta),&\hbox{ if }\ j\not= i,\\
1+ q_{ii}(x)\delta + o(\delta),  &\hbox{ if }\ j=i,
\end{cases}
 \end{equation} as $\delta \downarrow 0$,  where $q_{ij}(x)\ge 0$ for $i,j\in \M$
with $j\not= i$ and $ q_{ii}(x)=-\sum_{j\not= i}q_{ij}(x) <0$
 for each $i\in \M$.

   The evolution of the discrete component
  $\al\cd$ in \eqref{sw-jump-diffusion}
can be represented by a stochastic integral with respect to a Poisson random
measure; see, for example, \cite{Skorohod-89}. In fact,  for
$x\in \R^n$ and $i,j
\in \M$ with $j\not=i$, let $\Delta_{ij}(x)$ be
the consecutive  left-closed, right-open
intervals of
the half real line $\R_{+} : = [0,\infty)$, each having length $q_{ij}(x)$. In case $q_{ij}(x) =0$, we set $\Delta_{ij}(x) =\emptyset$. Define a function
$h: \R^n \times \M \times \R \mapsto \R$ by
\begin{equation}\label{h-def} h(x,i,z)=\sum^m_{j=1} (j-i)
I_{\{z\in\Delta_{ij}(x)\}}.
\end{equation}
Then the evolution of  the switching process \eqref{Q-gen} can be represented by the  stochastic differential equation
\begin{equation}\label{eq:jump-mg}
\d\al(t) = \int_{\R_{+}} h(X(t-),\al(t-),z) {N_1}(\d t,\d z),\end{equation}
where ${N_{1}}(\d t,\d z)$ is a Poisson random measure  (corresponding to a random point process $\mathfrak p\cd$) with intensity $\d t
\times \lambda(\d z)$, and $\lambda\cd$
is the Lebesgue measure on $\R$.  Denote the compensated Poisson random measure of $N_1\cd$
 by $\tilde N_{1}(\d t, \d z): = N_{1}(\d t, \d z)-\d t\times  \lambda (\d z)$. Throughout this paper, we assume that the L\'evy process $\psi\cd$, the random point process $\mathfrak p\cd$,
 and the Brownian motion
$W\cd$ are independent.

We make  the following assumptions    throughout the paper:
\begin{assumption}\label{assump-0} Assume
\begin{equation}\label{0-equilibrium}
b(0, i) = \sigma(0,i) = \int_{\R_{0}^{n} } \gamma(0, i, z) \nu(\d z) = 0\; \text{ for all } i\in \M.
\end{equation}
\end{assumption}
 \begin{assumption}\label{assump-ito}
For some positive constant $\kappa$,   we have
\begin{align}\label{ito-condition}
&  \begin{aligned}
& \abs{b(x,i)- b(y,i)}^2 + \abs{\sigma(x,i)- \sigma(y,i)}^2
 \\ & \qquad  \qquad \qquad
 + \int_{\R ^n_0} \abs{\gamma(x,i,z)- \gamma(y,i,z)}^2\nu(\d z) \le \kappa \abs{x-y}^2,
 \end{aligned}\\
 \label{eq2-ito-condition}
&\qquad
\int_{\R^n_0} \bigl[\abs{\ga(x,i,z)}^2 + |x\cdot \ga(x,i,z)| \bigr] \nu(\d z)
 \le  \kappa |x|^{2}   
 \end{align}
 for all $x,y \in \R ^n$  and $i\in \M=\set{1, \dots, m}$, and that
 \begin{equation}
\label{eq-q-bdd}
\sup \set{q_{ij}(x): x \in \R ^{n}, i\not= j \in \M} \le \kappa < \infty.
\end{equation}
\end{assumption}

Under Assumptions \ref{assump-0} and \ref{assump-ito}, $X(t) \equiv 0$ is an equilibrium point of \eqref{sw-jump-diffusion}. Moreover,
in view of \cite{ZhuYB-15},  for each initial condition $(x_0,\al_0) \in \R ^n \times \M$, the system represented by \eqref{sw-jump-diffusion} and \eqref{Q-gen} (or equivalently, \eqref{sw-jump-diffusion} and \eqref{eq:jump-mg})
has a unique strong solution  $(X\cd,\al\cd)=(X^{x_0,\al_0}\cd,\al^{x_0,\al_0}\cd)$; the solution does not explode in finite time  with probability one. In addition, the generalized It\^o lemma
  reads
\begin{equation*}\label{eq:ito}   f(X(t),\al(t)) -f(x_0,\al_0)
 = \int^t_0 {\mathcal L}f(X(s-),\al(s-)) \d s
+ M_1^f(t) + M_2^f(t) +M_3^f(t),  \end{equation*}  for $f\in C^{2}_{c} (\R^{n}\times\M)$,  where
$\mathcal L$ is the operator associated with the
process $(X,\al)$ defined by:
\begin{equation*}\label{op-defn}\begin{aligned}
\op f(x,i) =  &\, D f(x,i)\cdot b(x,i) + \frac{1}{2}\tr((\sigma\sigma')(x,i)D^2 f(x,i)) + \sum_{j\in \M} q_{ij}(x) [f(x,j)-f(x,i)]
\\  &\ + \int_{\R^n_{0}} \![f(x+ \gamma(x,i,z),i)-f(x,i)-
      D f(x,i)\cdot\gamma(x,i,z) ] \nu(\d z), \, (x,i)\in \R^{d}\times \M, \end{aligned}\end{equation*}
 and
\begin{align*}  M_1^f(t) &  = \int^t_0  D
f(X(s-),\al(s-)) \cdot \sigma(X(s-),\al(s-))
\d W(s), \\
 M_2^f(t) &  =  \int_0^t \int_{\R_+} \big[ f( X(s-), \al(s-)+
h(X(s-),\al(s-),z))
-f(X(s-),\al(s-))\big] \tilde N_{1}(\d s, \d z), \\
  M_3^f(t) & = \int_0^t \int_{\R^n_0} \!\left[f(X(s-) + \gamma(X(s-),\al(s-),z),\al(s-))- f(X(s-),\al(s-))\right]\tilde N(\d s, \d z).
 \end{align*}


Similar to the terminologies in \cite{khasminskii-12}, we have
\begin{defn}
The equilibrium point of \eqref{sw-jump-diffusion} is said to be
\begin{enumerate}

  \item[(i)] {\em almost surely exponentially stable} if  there exists a $\delta > 0$ independent of $(x_0,\alpha_0)\in \R^{n}_{0}  \times \M$ such that
  $$\limsup_{t\to\infty} \frac{1}{t}\log |X^{x_0,\alpha_0}(t)| \le -\delta\, \text{ a.s.} $$
  \item[(ii)] {\em exponentially stable in the $p$th moment} if there exists a $\delta > 0$ independent of $(x_0,\alpha_0)\in  \R^{n}_{0} \times \M$ such that
  $$\limsup_{t\to\infty} \frac{1}{t}\log \E[|X^{x_0,\alpha_0}(t)|^{p}] \le -\delta. $$
\end{enumerate}
\end{defn}

To study    stability of the equilibrium point of \eqref{sw-jump-diffusion}, we first present the following ``nonzero'' property, which asserts that almost all sample paths of all solutions to  \eqref{sw-jump-diffusion} starting from a nonzero initial condition will never reach the origin. This phenomenon was first established for diffusion processes in \cite{khasminskii-12} and later extended to regime-switching diffusions in \cite{MaoY,YZ-10} under fairly general conditions. For processes with L\'evy type jumps, additional assumptions are needed to handle the jumps.

\begin{assumption}\label{assump-jump}
Assume there exists a   constant  $\varrho >0$ such that 
 \begin{align}
\label{eq-no-jump-to-0}
|x+ \gamma(x,i,z)| \ge \varrho |x|, \text{ for all }(x,i) \in \R^{n}_{0}\times \M \text{ and }\nu\text{-almost all }z\in \R^{n}_{0}. 
\end{align}
\end{assumption} 
\begin{rem}\label{rem-concerning-assumption-jump} From  \cite{MaoY,YZ-10}, we know that under Assumptions \ref{assump-0} and \ref{assump-ito}, a regimes-switching diffusion without jumps cannot ``diffuse'' from a nonzero state to zero a.s. Assumption \ref{assump-jump} further prevents the process $X$ of \eqref{sw-jump-diffusion} jumps from a nonzero state to zero. 

Also a  sufficient condition for \eqref{eq-no-jump-to-0} is   \begin{displaymath}
 2x\cdot \gamma(x,i,z) + |\gamma(x,i,z)|^{2} \ge 0,
\end{displaymath} for $\nu$-almost all $z\in \R_{0}^{n}$ and all $(x,i) \in \R^{n}\times \M$.
Indeed, under such a condition, we have $|x + \gamma(x,i,z)|^{2 } = |x|^{2} + 2x\cdot \gamma(x,i,z) + |\gamma(x,i,z)|^{2} \ge |x|^{2} $ for $\nu$-almost all $z\in \R_{0}^{n}$. This, of course,  implies  \eqref{eq-no-jump-to-0}.  
\end{rem}

\begin{lem}\label{lem-0-inaccessible} Suppose  Assumptions \ref{assump-0}, \ref{assump-ito}, and  \ref{assump-jump} hold. 
 Then  
  for any $(x,i) \in \R^{n}_{0}\times \M$, we have
\begin{equation}\label{eq-path-0}
\P_{x,i} \{X(t) \neq 0 \text{ for all } t \ge 0 \} =1.
\end{equation}
\end{lem}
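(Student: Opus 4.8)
The plan is to use a logarithmic Lyapunov function together with a barrier/stopping-time argument in the spirit of Khasminskii. Set $V(x) := \log(1/|x|) = -\log|x|$, which is $C^2$ on $\R^n_0$ and blows up to $+\infty$ as $x\to 0$; here $DV(x) = -x/|x|^2$ and $D^2V(x) = -|x|^{-2}I + 2xx'/|x|^4$. The heart of the argument is the claim that there is a constant $C$, depending only on $\kappa$ and $\varrho$, with $\L V(y,i)\le C$ for all $(y,i)\in\R^n_0\times\M$. Since $V$ is independent of $i$, the switching term $\sum_{j\in\M} q_{ij}(y)[V(y,j)-V(y,i)]$ vanishes identically. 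For the drift and diffusion parts, \eqref{0-equilibrium} and \eqref{ito-condition} give the linear growth bounds $|b(y,i)|\le\sqrt\kappa\,|y|$ and $|\sigma(y,i)|\le\sqrt\kappa\,|y|$, whence $|DV(y)\cdot b(y,i)|\le\sqrt\kappa$, while $\tfrac12\tr((\sigma\sigma')(y,i)D^2V(y))\le\kappa$ after discarding the nonpositive term $-|\sigma(y,i)|^2/(2|y|^2)$ and using $|\sigma'(y,i)y|\le|\sigma(y,i)|\,|y|$.

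The jump term is the crux. Writing $e = y/|y|$, $u = \gamma(y,i,z)/|y|$, and $a := 2e\cdot u + |u|^2 = |e+u|^2 - 1$, an elementary computation gives
\[ V(y+\gamma)-V(y)-DV(y)\cdot\gamma = \tfrac12\big(a-\log(1+a)\big) - \tfrac12|u|^2. \]
Assumption \ref{assump-jump} guarantees $1+a = |e+u|^2\ge\varrho^2 > 0$, so the logarithm is finite; this is precisely where Assumption \ref{assump-jump} is indispensable, since without it $V(y+\gamma)$ could be $+\infty$ and the generator's jump integral would be ill defined. Now $g(a):=a-\log(1+a)$ is nonnegative and convex with $g'(0)=0$, so on $\{a\ge\varrho^2-1\}$ one has $g(a)\le C_3|a|$ for a constant $C_3=C_3(\varrho)$. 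Combining this with $|a|\le 2|e\cdot u|+|u|^2$ and \eqref{eq2-ito-condition}, rewritten as $\int_{\R^n_0}(|u|^2+|e\cdot u|)\,\nu(\d z)\le\kappa$, yields $\int_{\R^n_0}[V(y+\gamma)-V(y)-DV(y)\cdot\gamma]\,\nu(\d z)\le \tfrac12 C_3\int_{\R^n_0}|a|\,\nu(\d z)\le C_3\kappa$. Adding the three estimates proves the claim.

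With the bound in hand, fix $(x,i)\in\R^n_0\times\M$; for $0<\varepsilon<|x|<R$ and $T>0$ set $\theta_\varepsilon = \inf\{t:|X(t)|\le\varepsilon\}$, $\beta_R = \inf\{t:|X(t)|\ge R\}$, and $\tau = \theta_\varepsilon\wedge\beta_R\wedge T$. After a standard localization ensuring the martingale parts $M_1^V,M_2^V,M_3^V$ have zero mean up to $\tau$, Dynkin's formula and the claim give $\E_{x,i}[V(X(\tau))]\le V(x)+CT$. I would then bound the left side from below: on $\{\theta_\varepsilon\le\beta_R\wedge T\}$ one has $|X(\theta_\varepsilon)|\le\varepsilon$, hence $V(X(\tau))\ge\log(1/\varepsilon)$; on the complementary event the terminal norm is at least $\varepsilon$, and the only delicate contribution is the overshoot of the upper barrier produced by a jump, which I control using $\log s\le s$ for $s\ge1$ together with the standard moment estimate $\E_{x,i}[\sup_{s\le T}|X(s)|]<\infty$. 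This produces $\log(1/\varepsilon)\,\P_{x,i}(\theta_\varepsilon\le\beta_R\wedge T)\le K(R,T)$ with $K(R,T)$ finite and independent of $\varepsilon$, so letting $\varepsilon\downarrow 0$ gives $\P_{x,i}(\tau_0\le\beta_R\wedge T)=0$, where $\tau_0 = \inf\{t:X(t)=0\}$.

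Finally, since the solution does not explode, on $\{\tau_0<\infty\}$ the path is bounded on $[0,\tau_0]$, so up to a null set $\{\tau_0<\infty\}\subseteq\bigcup_{R,T\in\N}\{\tau_0\le\beta_R\wedge T\}$; letting $R,T\to\infty$ yields $\P_{x,i}(\tau_0<\infty)=0$, which is exactly \eqref{eq-path-0}. I expect the main obstacle to be the jump term: one must simultaneously verify that the generator's jump integral is finite (guaranteed by Assumption \ref{assump-jump}, which forbids jumps to the origin) and that it is bounded above, via the inequality $g(a)\le C_3|a|$ on $\{a\ge\varrho^2-1\}$ fed by \eqref{eq2-ito-condition}. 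The secondary technical point is the jump overshoot at the barrier $\beta_R$, handled through the uniform moment bound as indicated.
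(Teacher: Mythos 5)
Your proof is correct, but it follows a genuinely different route from the paper's. The paper works with $V(x,i)=|x|^{-2}$, establishes the multiplicative bound $\op V\le KV$ via a three-case Taylor estimate of $|x+y|^{-2}-|x|^{-2}+2|x|^{-4}x'y$ (Assumption \ref{assump-jump} entering only in the case $2x'y+|y|^{2}<0$), and then runs the exit-probability argument on the discounted supermartingale $e^{-Kt}V(X(t))$, yielding $\P_{x,i}\{\tau_{\e}<t\wedge\tau_{R}\}\le e^{Kt}\e^{2}|x|^{-2}$. You instead take the logarithmic barrier $V(x)=-\log|x|$ and prove the \emph{additive} bound $\op V\le C$, so that plain Dynkin gives $\log(1/\e)\,\P\{\theta_\e\le\beta_R\wedge T\}\le V(x)+CT+\mathrm{const}$; your identity $V(y+\gamma)-V(y)-DV(y)\cdot\gamma=\tfrac12\bigl(a-\log(1+a)\bigr)-\tfrac12|u|^{2}$ with $a=|e+u|^{2}-1\ge\varrho^{2}-1$ and the elementary estimate $a-\log(1+a)\le C_{3}(\varrho)|a|$ plays exactly the role of the paper's inequality \eqref{eq-|x|-2-estimate}, with \eqref{eq2-ito-condition} feeding both. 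Each choice buys something: the paper's $V$ is nonnegative, so no integrability of a negative part ever arises and the exponential discount cleanly absorbs the growth $KV$; your $V$ is sign-indefinite, which forces the extra step of dominating $\log^{+}\sup_{s\le T}|X(s)|$ by the second-moment estimate to justify Fatou/dominated convergence in Dynkin's formula and to control the overshoot at the upper barrier --- a point you correctly flag and which must not be omitted in a full write-up. The resulting barrier rates differ ($\log(1/\e)$ versus $\e^{2}$), but both vanish as $\e\downarrow 0$, so both close the argument.
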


\begin{proof} Consider the function $V(x,i): = |x|^{-2}$ for $x\neq 0$ and $i\in \M$.  Direct calculations reveal that 
$D V(x,i)= -2|x|^{-4} x,    $  and  $  D^2 V(x,i)=-2
  |x|^{-4}I  +8  |x|^{ -6}   x x'. $ 
  Next we prove that for all $x,y\in \R^{n}$ with $x\neq 0$ and $|x+y| \ge \varrho |x| $, we have
\begin{equation}
\label{eq-|x|-2-estimate}
V(x+y,i) - V(x,i) - DV(x,i)\cdot y = \frac{1}{|x+y|^{ 2}} - \frac{1}{|x|^{ 2}} + \frac{2  x' y}{|x|^{4}} \le K \frac{ |y|^{2}+ |x'y|}{|x|^{4}},
\end{equation} where $K$ is some positive constant. 
Let us    prove \eqref{eq-|x|-2-estimate} in several cases: 

{\bf Case 1: $x'y \ge 0$}. In this case, it is easy to verify that for any $\theta\in [0,1]$, we have $|x+ \theta y |^{2} = |x|^{2} + 2 \theta x'y + \theta^{2}|y|^{2} \ge |x|^{2}$. Therefore we can use  the Taylor expansion with integral reminder to compute  
\begin{align*}
    |x+y|^{-2} - |x|^{-2} + 2 |x|^{-4} x' y &  = \int_{0}^{1} \frac12 y\cdot D^{2} V(x+ \theta y) y \, \d \theta \\
    &   =   \int_{0}^{1} \biggl[- \frac{|y|^{2}}{|x+ \theta y|^{4}}+ 4\frac{y'(x+ \theta y)(x+ \theta y)' y}{|x+ \theta y |^{6}} \biggr]\d \theta\\
    & \le 4\int_{0}^{1} \frac{|y|^{2}}{|x+ \theta y|^{4}} \d \theta 
      \le 4\int_{0}^{1} \frac{|y|^{2}}{|x|^{4}} \d \theta =  \frac{4 |y|^{2}}{|x|^{4}}.
\end{align*}

{\bf Case 2: $x'y  <  0$ and $2x'y + |y|^{2} \ge 0$}.  In this case, we have $|x+y|^{2} = |x|^{2} + 2x'y+ |y|^{2} \ge |x|^{2}$ and hence $|x+y|^{-2} - |x|^{-2} \le 0$; which together with $x'y \le 0$ implies that 
\begin{displaymath}
|x+y|^{-2} - |x|^{-2} + 2 |x|^{-4} x' y \le 0.
\end{displaymath}  

{\bf Case 3: $x'y  <  0$ and $2x'y + |y|^{2} <  0$}.  In this case, we use the bound $|x+y| \ge \varrho |x| $ to compute 
\begin{align*}
   |x+y|^{-2} - |x|^{-2} + 2 |x|^{-4} x' y &  =  \frac{1}{ |x+y|^{2}} - \frac{1}{|x|^{ 2}}  -\frac{|y|^2}{|x|^{4}}+ \frac{2 x'y+|y|^2}{|x|^{4}}     \\
    &  =\frac{-2 x'y}{|x|^{2} |x+y|^{2}}   -\frac{|y|^{2}}{|x|^{2} |x+y|^{2}} -\frac{|y|^2}{|x|^{4}} +  \frac{2 x'y+|y|^{2}}{|x|^{4}} \\
    & \le \frac{2 |x'y|}{\varrho^{2}|x|^{4}}. 
\end{align*}

Combining the three cases gives \eqref{eq-|x|-2-estimate}. 

Observe that \eqref{eq-no-jump-to-0} of Assumption \ref{assump-jump} implies that if $x\neq 0$, then $x+ c(x,i,z) \neq 0$ for $\nu$-almost all $z \in \R_{0}^{n}$. Therefore    we use Assumptions  \ref{assump-0} and \ref{assump-ito} and  \eqref{eq-|x|-2-estimate} to compute 
\begin{align}\label{eq-op-calculation} \nonumber \op V(x,i) & =  -2 \abs{x}^{-4}x \cdot b(x,i) 
   +  \frac{1}{ 2}\tr\left[ \sigma \sigma'(x,i)  |x|^{-6} \(-2 \abs{x}^2
 I + 8  x x'\)\right]\\
 \nonumber & \quad + \int_{\R^{n}_{0}}\big[ |x + \gamma(x,i,z)|^{-2} - |x|^{-2}  +   2 |x|^{-4} x \cdot \gamma(x,i,z)  \big]\nu(\d z)\\
\nonumber  & \le 2 \kappa |x|^{-2} + 4 |\sigma(x,i)|^{2} |x|^{-4} + K |x|^{-4} \int_{\R_{0}^{n}}\bigl[ |\gamma(x,i,z)|^{2}+ |x\cdot\gamma(x,i,z)|\bigr]\nu(\d z)\\
 & \le K |x|^{-2} = K V(x,i), \end{align} 
 where $K$ is a positive constant. 
 
Now consider  the process $  (X,\al)$ with initial condition $(X(0),\al(0)) = (x,i) \in \R_{0}^{n}\times \M$. Define for $0< \e < |x| < R$, $\tau_{\e}: = \inf\{ t\ge 0: |X(t) | \le \e\}$ and $\tau_{R} : = \inf\{ t\ge 0: |X(t)| \ge R\}$. Then   \eqref{eq-op-calculation} allows us to derive  
\begin{align*}
\E_{x,i}& [e^{-K  (t\wedge \tau_{\e} \wedge \tau_{R})}V(X(t\wedge \tau_{\e} \wedge \tau_{R}),\al(t\wedge \tau_{\e} \wedge \tau_{R}))]     
\\ & =  V(x,i) + \E_{x,i} \biggl[ \int_{0}^{t\wedge \tau_{\e} \wedge \tau_{R}} e^{-K  s} (-K   + \op) V(X(s),\al(s))\d s\biggr]    \\
    &   \le V(x,i) = |x|^{-2}, \ \ \text{ for all }t\ge 0.
\end{align*} Note that on the set $\{ \tau_{\e} < t \wedge \tau_{R}\}$, we have $V(X(t\wedge \tau_{\e} \wedge \tau_{R}),\al(t\wedge \tau_{\e} \wedge \tau_{R})) = |X(t\wedge \tau_{\e} \wedge \tau_{R})|^{-2} \ge \e^{-2}.$  Thus  it follows that 
\begin{displaymath}
e^{-K  t} \e^{-2} \P_{x,i}\{ \tau_{\e} < t \wedge \tau_{R}\} \le \E_{x,i}  [e^{-K  (t\wedge \tau_{\e} \wedge \tau_{R})}V(X(t\wedge \tau_{\e} \wedge \tau_{R}),\al(t\wedge \tau_{\e} \wedge \tau_{R}))]  \le |x|^{-2}.
\end{displaymath} It is well known that under Assumptions  \ref{assump-0} and \ref{assump-ito}, the process $X $ has no finite explosion time   and hence $\tau_{R} \to \infty$ a.s. as $R \to \infty$. Therefore for any $t> 0$, we have $\P_{x,i}\{\tau_{\e} < t \} \le e^{K  t} \e^{2}   |x|^{-2}  $. Passing to the limit as  $\e \downarrow 0$, we  obtain  $\P_{x,i}\{ \tau_{0} < t\} =0$ for any $t> 0$, where $\tau_{0}: = \inf\{t\ge 0: X(t) =0\}$. This gives \eqref{eq-path-0} and hence completes the proof. 
\end{proof}

\section{Stability of Nonlinear Systems: General Results}\label{sect:nonlinear}
This section is devoted to establishing sufficient conditions in terms of the existence of appropriate  Lyapunov functions  for stability of the equilibrium point of the system \eqref{sw-jump-diffusion}. Section \ref{subsect:as-stab} considers almost surely exponential stability while Section \ref{sect:pth-moment-stab} studies $p$th moment exponential stability and demonstrates that $p$th moment exponential stability implies almost surely exponential stability   under certain conditions.   Finally we present a sufficient condition for stability using $M$-matrices in Section \ref{sect:M-matrix}.
 \subsection{Almost Sure Exponential  Stability}\label{subsect:as-stab}

\begin{thm}\label{thm-as-exp-stable} Suppose  Assumptions \ref{assump-0}, \ref{assump-ito}, and  \ref{assump-jump} hold.  Let $V:  \R^n\times\mathcal{M} \mapsto\R^+ $ be such that $V(\cdot, i)\in C^{2}(\R^{n})$ for each $i\in \M$. Suppose there exist    $p>0$, $c_1(i) >0$, $c_2(i) \in \R$, and nonnegative constants $c_3(i) $, $c_4(i) $ and $c_5(i) $
  such that for all $x\not=0$ and $i\in \M$,
\begin{itemize}
  \item [(i)] $c_1(i) |x|^{p}\leq V(x,i),$
 \item[(ii)] $\op V(x,i)\leq c_2(i) V(x,i),$
 \item[(iii)] $| DV(x,i)\cdot \sigma(x,i)|^2\geq c_3(i) V(x,i)^2$,
\item[(iv)] $\displaystyle\int_{\R_0^{n}}\biggl[\log\left(\frac{V(x+\gamma(x,i,z),i)}{V(x,i)}\right)-\frac{V(x+\gamma(x,i,z),i)}{V(x,i)}+1\biggr]\nu(\d z)\leq-c_4(i) $,
 \item[(v)] $\displaystyle\sum_{j\in \mathcal{M}}q_{ij}(x)\left(\log V(x,j)-\frac{V(x,j)}{V(x,i)}\right)\leq -c_5(i) .$
\end{itemize}
Then
 \begin{equation}
\label{eq1-X-as-exp-stab}
 \limsup_{t\rightarrow \infty}\frac{1}{t}\log |X(t)|\leq  \frac{1}{p} \max_{i\in \M}\bigl\{c_{2}(i)-  0.5 c_{3}(i) -c_{4}(i) -c_{5}(i)\bigr \} =:\delta \;\text{ a.s.}
\end{equation}
In particular,  if $\delta<0$ then the trivial solution of  \eqref{sw-jump-diffusion}   is a.s. exponentially stable. 
\end{thm}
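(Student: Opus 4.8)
The plan is to track the one–dimensional process $\log V(X(t),\al(t))$ and read off its asymptotic growth rate. This is legitimate because Lemma \ref{lem-0-inaccessible} guarantees that, starting from $(x,i)\in\R^n_0\times\M$, one has $X(t)\neq0$ for all $t\ge0$ almost surely, so $V(X(t),\al(t))>0$ and $\log V$ is well defined along the trajectory. Since $\log V(\cdot,i)$ is only $C^2$ on $\R^n_0$ rather than compactly supported, I would first apply the generalized It\^o formula to the process stopped at $\tau_\e\wedge\tau_R$, with $\tau_\e$ and $\tau_R$ as in the proof of Lemma \ref{lem-0-inaccessible}, and then remove the localization using $\tau_\e\uparrow\tau_0=\infty$ and $\tau_R\uparrow\infty$ almost surely.

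The computational heart of the argument is an identity for $\op(\log V)$. Writing $g=\log V$ and using $Dg=DV/V$ together with $D^2g=D^2V/V-(DV)(DV)'/V^2$, and exploiting $\sum_{j\in\M}q_{ij}(x)=0$ and the cancellation of the $DV\cdot\ga/V$ terms between the It\^o correction and the jump compensator, I expect to obtain
\begin{align*}
\op g(x,i)&=\frac{\op V(x,i)}{V(x,i)}-\frac{|DV(x,i)\cdot\sigma(x,i)|^2}{2V(x,i)^2}+\sum_{j\in\M}q_{ij}(x)\Bigl[\log V(x,j)-\frac{V(x,j)}{V(x,i)}\Bigr]\\
&\quad+\int_{\R^n_0}\Bigl[\log\frac{V(x+\ga(x,i,z),i)}{V(x,i)}-\frac{V(x+\ga(x,i,z),i)}{V(x,i)}+1\Bigr]\nu(\d z).
\end{align*}
Keeping the It\^o correction explicit and invoking (ii), (iv), and (v), this yields the pointwise bound
\begin{equation*}
\op g(x,i)\le\bigl[c_2(i)-c_4(i)-c_5(i)\bigr]-\frac{|DV(x,i)\cdot\sigma(x,i)|^2}{2V(x,i)^2}.
\end{equation*}
Feeding this into the It\^o formula gives $g(X(t),\al(t))=g(x,i)+\int_0^t\op g\,\d s+M_1(t)+M_2(t)+M_3(t)$, where $M_1$ is the continuous martingale with $\langle M_1\rangle_t=\int_0^t|DV\cdot\sigma|^2/V^2\,\d s$ and $M_2,M_3$ are the compensated Poisson integrals coming from the switching and the jumps, respectively.

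To dispose of $M_1$ I would apply the exponential martingale inequality together with the Borel--Cantelli lemma: for a fixed $\e\in(0,1)$, almost surely there is $k_0$ such that $M_1(t)\le\frac{2}{\e}\log k+\frac{\e}{2}\langle M_1\rangle_t$ for all $k\ge k_0$ and $0\le t\le k$. The crucial point is that the term $\frac{\e}{2}\langle M_1\rangle_t$ is absorbed by the negative It\^o correction $-\frac12\langle M_1\rangle_t$ already present in the drift, leaving $-\frac{1-\e}{2}\int_0^t|DV\cdot\sigma|^2/V^2\,\d s$, which by (iii) is at most $-\frac{1-\e}{2}\int_0^tc_3(\al(s))\,\d s$; note that this avoids needing any upper bound on $\langle M_1\rangle_t$, which the hypotheses do not provide. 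Collecting terms and dividing by $t$ gives, on $k-1\le t\le k$,
\begin{equation*}
\frac1t\log V(X(t),\al(t))\le\max_{i\in\M}\Bigl\{c_2(i)-\tfrac{1-\e}{2}c_3(i)-c_4(i)-c_5(i)\Bigr\}+\frac{M_2(t)+M_3(t)}{t}+o(1),
\end{equation*}
and letting $\e\downarrow0$ restores the coefficient $0.5\,c_3$.

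It remains to prove $\limsup_{t\to\infty}\frac1t\bigl(M_2(t)+M_3(t)\bigr)\le0$ almost surely, and this is where I anticipate the main obstacle. Both $M_2$ and $M_3$ are purely discontinuous local martingales, and I would invoke the strong law of large numbers for local martingales, which forces $M(t)/t\to0$ once the predictable quadratic variation grows at most linearly. For the switching martingale $M_2$ this should follow from the finiteness of $\M$ and the boundedness of the rates in \eqref{eq-q-bdd}; the delicate case is $M_3$, whose predictable quadratic variation density is $\int_{\R^n_0}[\log(V(x+\ga)/V)]^2\nu(\d z)$, a quantity not directly controlled by (i)--(v). I would control it by combining Assumption \ref{assump-ito}, the lower bound (i), and Assumption \ref{assump-jump} (which keeps $V(x+\ga)$ away from $0$), after the truncation at $\tau_\e\wedge\tau_R$ so that $V$ and hence the relevant densities stay bounded on $\{\e\le|X|\le R\}$; an alternative is a jump-type exponential inequality in the spirit of \cite{ApplS-09}. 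Once the jump martingales are shown to be negligible, combining the displayed estimate with (i), in the form $\log|X(t)|\le\frac1p\bigl(\log V(X(t),\al(t))-\log c_1(\al(t))\bigr)$ with $\log c_1(\cdot)$ bounded over the finite set $\M$, yields \eqref{eq1-X-as-exp-stab}; and $\delta<0$ then gives almost sure exponential stability.
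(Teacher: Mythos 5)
Your overall architecture is the same as the paper's: pass to $U=\log V$ using the nonzero property, compute $\op U$ exactly as you display it, bound it by $c_2-c_4-c_5$ minus the It\^o correction, and absorb the Brownian martingale $M_1$ by the exponential martingale inequality plus Borel--Cantelli so that the correction survives as $-\tfrac{1-\e}{2}c_3$ and then $\e\downarrow 0$. That part is sound. The genuine gap is exactly where you anticipated it: your primary route for the jump martingales $M_2,M_3$ --- the strong law of large numbers for local martingales --- requires $\langle M_3\rangle_t=\int_0^t\int_{\R^n_0}\bigl[\log\bigl(V(X+\ga)/V(X)\bigr)\bigr]^2\nu(\d z)\,\d s$ to grow at most linearly, and nothing in (i)--(v) or Assumptions \ref{assump-0}--\ref{assump-jump} controls $\int[\log a]^2\nu(\d z)$ for the ratio $a=V(x+\ga,i)/V(x,i)$. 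Condition (iv) and the integrability coming from \eqref{eq2-ito-condition} only control $a-1-\log a$, and as $a\downarrow 0$ (jumps that sharply decrease $V$) one has $(\log a)^2\sim|\log a|^2\gg a-1-\log a\sim|\log a|$, so the second moment of the log-ratio is genuinely not dominated by the quantities you have. Your proposed repair via the localization at $\tau_\e\wedge\tau_R$ does not close this: keeping $\e\le|X|\le R$ bounds $V(X)$ but not $V(X+\ga)$, since $\ga(x,i,z)$ is only square-integrable against $\nu$, not bounded, and there is no upper bound $V(x,i)\le c|x|^p$ among the hypotheses; moreover any bound so obtained would depend on $(\e,R)$ and degenerate as the localization is removed, whereas the SLLN needs a linear-in-$t$ bound uniformly as $t\to\infty$. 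The same objection applies, less dramatically, to $M_2$: condition (v) is one-sided and does not bound $[\log(V(x,j)/V(x,i))]^2$.

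The paper's proof closes this by taking the route you only mention as an ``alternative'': it applies the exponential martingale inequality for L\'evy-type integrals (\cite[Theorem 5.2.9]{APPLEBAUM}) to the \emph{whole} local martingale $M=M_1+M_2+M_3$ with a parameter $\theta\in(0,1)$. The price for the jump parts is not a quadratic variation but correction terms of the form $\frac1\theta\bigl[a^\theta-1-\theta\log a\bigr]$, which satisfy $0\le\frac1\theta\bigl[a^\theta-1-\theta\log a\bigr]\le a-1-\log a$ by $a^\theta\le 1+\theta(a-1)$; these are pathwise integrable on each finite horizon (via the modification of Lemma 3.3 of \cite{ApplS-09} using \eqref{eq2-ito-condition}, and via \eqref{eq-q-bdd} for the switching part) and vanish as $\theta\downarrow 0$ by dominated convergence, so no square-integrability of the log-ratio is ever needed. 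To repair your argument you should let the exponential inequality handle $M_2$ and $M_3$ as well, rather than the strong law of large numbers.
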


\begin{rem}
Conditions (iv) and (v) of Theorem \ref{thm-as-exp-stable} are natural because of the following observations. At one hand,   using the elementary inequality $\log y\leq y-1$ for $y>0$ we derive
 $$\displaystyle\int_{\R_0^{n}}\biggl[\log\left(\frac{V(x+\gamma(x,i,z),i)}{V(x,i)}\right)-\frac{V(x+\gamma(x,i,z),i)}{V(x,i)}+1\biggr]\nu(\d z)\leq 0;$$ this leads us to assume that the left-hand side of the above   equation is bounded above by a nonpositive constant $-c_4(i) $ in condition (iv); however, the constant may depend on $i\in\M$. On the other hand,   the   inequality $\log y\leq y-1$ for $y>0$ also leads to
\[\log V(x,j)-\frac{V(x,j)}{V(x,i)}\leq \log V(x,i)-1.\]
Then it follows that  for every  $ i\in \mathcal{M}$,
\[
\aligned
&  \sum_{j\in \mathcal{M}}q_{ij}(x)\left(\log V(x,j)-\frac{V(x,j)}{V(x,i)}\right)  \\ & \quad 
=\sum_{j\not=i}q_{ij}(x)\left(\log V(x,j)-\frac{V(x,j)}{V(x,i)}\right)+q_{ii}(x)(\log V(x,i)-1)\\
 &\quad \leq \sum_{j\not=i}q_{ij}(x)\left(\log V(x,i)-1\right)+q_{ii}(x)(\log V(x,i)-1)= 0.
\endaligned 
\] In view of this observation, a nonpositive constant $-c_{5}(i)$ in condition (v) is therefore reasonable; again, this constant may depend on $i\in \M$. In fact, the constants $c_{1}(i), \dots, c_{5}(i) $ in Conditions (i)--(iv) may all depend on $i\in \M$; this allows for some extra flexibility for the selection of the Lyapunov function $V$ and  more importantly, the sufficient condition for a.s. exponential stability in Theorem  \ref{thm-as-exp-stable} 
and Corollary \ref{coro-as-exp-stable}. 

It is also worth poiting out that  conditions (i)--(iv) are similar to those in Theorem 3.1 of \cite{ApplS-09}. Condition (v) is needed so that we can   control the fluctuations of $\frac1t \log|X(t)|$ due to the presence of regime switching. 
\end{rem}

The proof of Theorem \ref{thm-as-exp-stable} is a straightforward extension of that of Theorem 3.1 of  \cite{ApplS-09};   some additional care are needed due to the presence of  regime-switching. For completeness and also to preserve the flow of presentation, we  relegate the proof to  the Appendix \ref{sect-appendix}.

 \begin{coro}\label{coro-as-exp-stable}
 In addition to the conditions of Theorem \ref{thm-as-exp-stable}, suppose also that the discrete component $\al$  in  \eqref{sw-jump-diffusion}  and \eqref{eq:jump-mg} is an irreducible continuous-time Markov chain with  an invariant distribution $\pi = (\pi_{i}, i \in \M)$, then \eqref{eq1-X-as-exp-stab} can be strengthened to
 \begin{equation}
\label{eq2-X-as-exp-stab}
 \limsup_{t\rightarrow \infty}\frac{1}{t}\log |X(t)|\leq  \frac{1}{p}  \sum_{i\in \M }\pi_{i} [c_{2}(i)  - 0.5\, { c_{3}(i) }- c_{4}(i)  -c_{5}(i)]\; \text{ a.s.}
\end{equation}
 \end{coro}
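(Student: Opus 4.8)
The plan is to run the argument of Theorem \ref{thm-as-exp-stable} essentially verbatim and to sharpen only its final averaging step. Recall that the proof of Theorem \ref{thm-as-exp-stable} (carried out in the Appendix) applies the generalized It\^o formula to $\log V(X(t),\al(t))$ and, using conditions (ii)--(v) together with the correction $-\tfrac12 |DV\cdot\sigma|^2/V^2 \le -\tfrac12 c_3$ produced by the logarithm in condition (iii), yields a pathwise bound of the form
\begin{equation*}
\frac1t \log V(X(t),\al(t)) \le \frac1t\log V(x_0,\al_0) + \frac1t\int_0^t \Lambda(\al(s))\,\d s + \frac{M(t)}{t},
\end{equation*}
where $\Lambda(i) := c_2(i) - 0.5\,c_3(i) - c_4(i) - c_5(i)$ and $M=M_1+M_2+M_3$ collects the Brownian and the two compensated-Poisson martingale terms, which are shown there to satisfy $\limsup_{t\to\infty} M(t)/t \le 0$ a.s. Bounding $\frac1t\int_0^t\Lambda(\al(s))\,\d s \le \max_{i\in\M}\Lambda(i)$ then gives exactly \eqref{eq1-X-as-exp-stab}. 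First I would record this inequality, observing that its derivation nowhere uses the precise form of the time-average of $\Lambda(\al\cd)$, only a bound on it.

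The single new ingredient is to estimate that time-average sharply rather than crudely. Under the extra hypothesis of the corollary the generator $Q$ is constant in $x$, so $\al\cd$ is an autonomous, irreducible, finite-state continuous-time Markov chain admitting the stationary distribution $\pi$. Hence the ergodic theorem for additive functionals applies: since $\Lambda$ is a finite (indeed bounded) function on the finite space $\M$,
\begin{equation*}
\lim_{t\to\infty}\frac1t\int_0^t \Lambda(\al(s))\,\d s = \sum_{i\in\M}\pi_i\,\Lambda(i)\quad \text{a.s.}
\end{equation*}
I would invoke the standard strong law for irreducible Markov chains (positive recurrence is automatic here, as $\M$ is finite) to justify this almost sure convergence.

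Finally I would combine the two displays with condition (i): from $c_1(i)|x|^p \le V(x,i)$ one gets $p\log|X(t)| \le \log V(X(t),\al(t)) - \log(\min_{j\in\M}c_1(j))$, so dividing by $t$, letting $t\to\infty$, using that the first term above tends to $0$, that the time-average now converges to $\sum_i\pi_i\Lambda(i)$, and that $\limsup_t M(t)/t \le 0$, I obtain $\limsup_{t\to\infty}\tfrac1t\log|X(t)| \le \tfrac1p\sum_{i\in\M}\pi_i\Lambda(i)$, which is \eqref{eq2-X-as-exp-stab}. The only genuine obstacle is the justification that $\al\cd$ is a bona fide ergodic Markov chain, and this is precisely where the corollary's hypothesis is indispensable: the existence of a stationary $\pi$ presupposes an $x$-independent, irreducible generator $Q$, whereas for a state-dependent switching mechanism $\al\cd$ need not be Markov on its own and the ergodic average need not converge to $\pi(\Lambda)$. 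Everything else---the It\^o expansion, the use of conditions (ii)--(v), and the martingale law of large numbers---is inherited unchanged from Theorem \ref{thm-as-exp-stable}.
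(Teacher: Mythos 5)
Your proposal is correct and matches the paper's argument: the paper likewise takes the pathwise bound \eqref{eq2-U-prelimit} from the proof of Theorem \ref{thm-as-exp-stable} and, instead of bounding the time average of $c_2-0.5c_3-c_4-c_5$ along $\al\cd$ by its maximum, applies the ergodic theorem for the irreducible finite-state Markov chain to identify its almost sure limit as $\sum_i\pi_i[c_2(i)-0.5c_3(i)-c_4(i)-c_5(i)]$. Your additional remarks on why the constant, irreducible generator is needed are accurate but not part of the paper's (very short) proof.
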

\begin{proof}
 This follows from applying the ergodic theorem of continuous-time  Markov chain (see, for example, \cite[Theorem 3.8.1]{Norris-98}) to the right-hand side of \eqref{eq2-U-prelimit}:
   \begin{align*} \lim_{t\to\infty} &  \frac{1}{t}  \int_{0}^{t} \bigl[c_{2}(\al(s))- 0.5 c_{3}(\al(s)) -c_{4}(\al(s)) -c_{5}(\al(s))\bigr]\d s \\
& = \sum_{i=1}^m\pi_i[c_{2}(i)  - 0.5\, { c_{3}(i) }- c_{4}(i)  -c_{5}(i)]\; \text{ a.s.}
  \end{align*} Then \eqref{eq2-X-as-exp-stab} follows directly.
\end{proof}

\subsection{Exponential $p$th-Moment Stability}\label{sect:pth-moment-stab}

 \begin{thm}\label{thm-moment-stab} Suppose Assumptions \ref{assump-0} and \ref{assump-ito}.
 Let $p, c_{1}, c_{2}, c_{3}$ be positive constants. Assume that there exists a function $V:\R^{n}\times \M \mapsto \R^{+}$ such that $V(\cdot,i)\in C^{2} (\R^{n})$ for each $i\in \M$ satisfying
 \begin{itemize}
  \item[(i)] $c_{1} |x|^{p} \le V(x,i) \le c_{2} |x|^{p}$,
  \item[(ii)]  $\op V(x,i)\leq -c_3V(x,i),$
\end{itemize} for all $(x,i) \in \R^{n}\times \M$. Let $(X(0),\al(0)) = (x,i) \in\R^{n}\times \M$. 
Then we have \begin{itemize}
  \item [(a)] 
$\E[|X(t)|^{p}] \le \frac{c_{2}}{c_{1}} |x|^{p} e^{-c_{3} t}. $
 In particular, the equilibrium point of  \eqref{sw-jump-diffusion}  is  exponentially stable in the $p$th moment with Lyapunov exponent less than or equal to $-c_{3}$.

  \item[(b)] Assume in addition that $p \in (0, 2]$. Then there exists an almost surely finite and positive random variable $\Xi$ such that \begin{equation}
\label{eq new-as-exp stable}
|X(t)|^{p} \le \frac{\Xi}{c_{1}} e^{-c_{3}t} \text{ for all }t \ge 0 \text{ a.s.} 
\end{equation} In particular, the equilibrium point of  \eqref{sw-jump-diffusion}  is almost sure exponentially stable with Lyapunov exponent less than or equal to $-\frac{c_{3}}{p}$.
 
\end{itemize} \end{thm}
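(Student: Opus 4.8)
The plan is to base both parts on the generalized It\^o formula applied to the exponentially weighted process $Y(t):=e^{c_{3}t}V(X(t),\al(t))$, the point being that hypothesis (ii) makes the drift of $Y$ nonpositive. For part (a) this produces a moment bound after localization and taking expectations; for part (b) it exhibits $Y$ as a nonnegative supermartingale, and the almost sure finiteness of its running supremum yields the pathwise estimate \eqref{eq new-as-exp stable}.

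First I would prove (a). Applying the generalized It\^o lemma to $V$, together with the product rule for the factor $e^{c_{3}t}$, gives on the stochastic interval $[0,t\wedge\tau_{R}]$, with $\tau_{R}:=\inf\{t\ge 0:|X(t)|\ge R\}$,
\[
e^{c_{3}(t\wedge\tau_{R})}V(X(t\wedge\tau_{R}),\al(t\wedge\tau_{R}))=V(x,i)+\int_{0}^{t\wedge\tau_{R}}e^{c_{3}s}\,(c_{3}+\op)V(X(s-),\al(s-))\,\d s+\mathfrak{M}(t\wedge\tau_{R}),
\]
where $\mathfrak{M}$ collects the martingale terms $M_{1}^{V},M_{2}^{V},M_{3}^{V}$ weighted by $e^{c_{3}\cdot}$. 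Then $\mathfrak{M}$ is a local martingale which, after a standard localization, has zero expectation. By (ii), $(c_{3}+\op)V\le 0$, whence $\E[e^{c_{3}(t\wedge\tau_{R})}V(X(t\wedge\tau_{R}),\al(t\wedge\tau_{R}))]\le V(x,i)$. Since the solution has no finite explosion time, $\tau_{R}\to\infty$ a.s.\ as $R\to\infty$, and Fatou's lemma gives $\E[e^{c_{3}t}V(X(t),\al(t))]\le V(x,i)$. Invoking the two-sided bound (i) on both sides, i.e.\ $c_{1}|X(t)|^{p}\le V(X(t),\al(t))$ and $V(x,i)\le c_{2}|x|^{p}$, yields $\E[|X(t)|^{p}]\le \frac{c_{2}}{c_{1}}|x|^{p}e^{-c_{3}t}$, which is statement (a); the claimed $p$th moment Lyapunov exponent $\le -c_{3}$ is immediate.

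Next I would prove (b). The same It\^o decomposition shows that $Y(t)$ is the sum of the constant $V(x,i)$, the nonincreasing process $\int_{0}^{t}e^{c_{3}s}(c_{3}+\op)V(X(s-),\al(s-))\,\d s$ (nonincreasing by (ii)), and a local martingale; hence $Y$ is a nonnegative local supermartingale and therefore a true supermartingale. Doob's maximal inequality for nonnegative supermartingales then yields $\lambda\,\P\{\sup_{0\le s\le t}Y(s)\ge\lambda\}\le \E[Y(0)]=V(x,i)$ for every $\lambda>0$ and $t\ge 0$; letting $t\to\infty$ and then $\lambda\to\infty$ shows that $\Xi:=\sup_{t\ge0}Y(t)$ is almost surely finite, while $\Xi\ge Y(0)=V(x,i)\ge c_{1}|x|^{p}>0$ for $x\ne 0$ shows it is positive. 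Combining $Y(t)\le\Xi$ with $c_{1}|X(t)|^{p}\le V(X(t),\al(t))$ gives $|X(t)|^{p}\le\frac{\Xi}{c_{1}}e^{-c_{3}t}$ for all $t\ge 0$, which is \eqref{eq new-as-exp stable}; taking logarithms, dividing by $t$, and letting $t\to\infty$ gives $\limsup_{t\to\infty}\frac1t\log|X(t)|\le-\frac{c_{3}}{p}$ a.s.

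I expect the main obstacle to be the rigorous justification that $Y$ is a bona fide (local) supermartingale rather than a process with merely formally nonpositive drift, and this is precisely where the restriction $p\in(0,2]$ enters. The delicate ingredient is the jump local martingale $M_{3}^{V}$, whose integrand $V(X(s-)+\gamma)-V(X(s-))$ must be suitably integrable against $\nu(\d z)\,\d s$; since $V$ is comparable to $|x|^{p}$, splitting into small and large jumps and using the elementary estimates for $r\mapsto r^{p}$ with $p\le 2$, the increment is dominated by a combination of $|\gamma|^{2}$ and $|x\cdot\gamma|$, which is $\nu$-integrable by \eqref{eq2-ito-condition} together with \eqref{levy-measure}. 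I would carry out this splitting carefully to confirm that $M_{3}^{V}$ is a local martingale, after which the nonnegative-supermartingale and maximal-inequality steps close the argument.
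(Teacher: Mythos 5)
Your argument is correct and follows essentially the same route as the paper: both parts rest on the observation that hypothesis (ii) makes $e^{c_{3}t}V(X(t),\al(t))$ a process with nonpositive drift, which after localization and Fatou's lemma is a nonnegative supermartingale, and part (a) then follows by taking expectations and applying the two-sided bound (i). The only divergence is at the end of (b), where the paper invokes the supermartingale convergence theorem to conclude that $\sup_{t\ge 0}e^{c_{3}t}V(X(t),\al(t))$ is a.s.\ finite while you use Doob's maximal inequality (both are valid, and arguably yours is the more direct); your identification of $p\in(0,2]$ as the hypothesis controlling integrability of the jump terms is consistent with the paper, which uses it through the moment bound of Lemma 3.1 of \cite{ZhuYB-15}.
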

\begin{proof}
 The proof of part (a) is very similar to that of Theorem 3.1 in \cite{Mao-99}; see also Theorem 4.1 in \cite{ApplS-09}. For brevity, we shall omit the details here. 
 
 Part (b) is motivated by  \cite[Theorem 5.15]{khasminskii-12}. For any $t \ge 0$ and $(x,i) \in\R^{n}\times \M$, we consider the function $f(t,x,i): = e^{c_{3}t}V(x,i)$.  Condition (i) and Lemma 3.1 of \cite{ZhuYB-15} imply that $$\E[f(t,X(t),\al(t))] = \E[e^{c_{3}t} V(X(t),\al(t))]\le c_{2 }e^{c_{3} t } \E[ |X(t)|^{p}] < \infty.$$ On the other hand, thanks to It\^o's formula, we have for all $0 \le s < t$
 \begin{align*}
  f&(t,X(t),\al(t)) \\  & =  f(s,X(s),\al(s))+  \int_{s}^{t}e^{c_{3}r } (c_{3} + \op)V(X(r),\al(r))\d r  \\ & \quad+  \int_{s}^{t}e^{c_{3}r }D V(X(r),\al(r)) \cdot \sigma(X(r),\al(r)) \d W(r) \\ 
  & \quad + \int_{s}^{t}\int_{\R_+} e^{c_{3}r }\big[ V( X(r-), \al(r-)+ h(X(r-),\al(r-),z))-V(X(r-),\al(r-))\big] \tilde N_{1}(\d r, \d z)\\
  &\quad+  \int_{s}^{t}\int_{\R^{n}_{0}}e^{c_{3}r } [V(X(r-) + \gamma(X(r-),\al(r-),z), \al(r-)) - V(X(r-),\al(r-))] \wdt N(\d r, \d z)\\ 
  & \le  f(s,X(s),\al(s))+ \int_{s}^{t}e^{c_{3}r }D V(X(r),\al(r)) \cdot \sigma(X(r),\al(r)) \d W(r) \\ & \quad + \int_{s}^{t}\int_{\R_+} e^{c_{3}r }\big[ V( X(r-), \al(r-)+ h(X(r-),\al(r-),z))-V(X(r-),\al(r-))\big] \tilde N_{1}(\d r, \d z)\\ &\quad+  \int_{s}^{t}\int_{\R^{n}_{0}}e^{c_{3}r } \big[V(X(r-) + \gamma(X(r-),\al(r-),z), \al(r-)) - V(X(r-),\al(r-))\big] \wdt N(\d r, \d z),\end{align*} where we used condition (ii) to obtain the inequality. 
  Let $\tau_{n}: = \inf\{t\ge0: |X(t)| \ge n\} $. Then we have $\lim_{n \to \infty}\tau_{n}= \infty$ a.s.  and $\E[f(t\wedge\tau_{n}, X(t \wedge \tau_{n}), \al(t\wedge \tau_{n})) |\F_{s}] \le f(s\wedge\tau_{n}, X(s \wedge \tau_{n}), \al(s\wedge \tau_{n}))$     a.s.  Passing to the limit as $n\to \infty$, and noting that $f$ is positive,  we obtain from Fatou's lemma that $\E[ f(t,X(t),\al(t))|\F_{s}] \le f(s,X(s),\al(s))$ a.s. Therefore it follows that the process $\{f(t,X(t),\al(t)), t \ge 0 \}$ is a positive   supermartingale. The martingale convergence theorem (see, for example, Theorem 3.15 and Problem 3.16 in \cite{Karatzas-S}) then implies that  $f(t,X(t),\al(t))$ converges a.s. to a  finite limit as $t \to \infty$. Consequently there exists an a.s. finite and positive  random variable $\Xi$ such that $$\sup_{t \ge 0}\{  e^{c_{3}t} V( X(t),\al(t))\} = \sup_{t \ge 0} \{ f(t, X(t),\al(t))\}  \le \Xi < \infty,  \text{ a.s.}$$ Furthermore, it follows from condition (i) that $|X(t)|^{p} \le \frac{1}{c_{1}} V(X(t),\al(t))$. Putting this observation into the above displayed equation yields \eqref{eq new-as-exp stable}.
\end{proof}

\begin{thm}\label{thm-moment-as-stable} Let Assumptions \ref{assump-0} and \ref{assump-ito} hold.
Suppose  the equilibrium point of \eqref{sw-jump-diffusion} is $p$th moment exponentially stable for some $p \ge 2$
 and that for some positive constant $\hat \kappa$, we have
\begin{equation}
\label{eq-pth-moment-as-stable}
\int_{\R^{n}_{0}} \abs{\gamma(x,i,z)}^{p}\nu(dz) \le \hat \kappa |x|^{p}, \quad (x,i) \in \R^{n} \times \M.
\end{equation} Then  the equilibrium point of \eqref{sw-jump-diffusion} is almost surely exponentially stable.
\end{thm}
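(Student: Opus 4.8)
The plan is to follow the standard route from moment stability to pathwise stability (cf.\ \cite{khasminskii-12,MaoY,ApplS-09}), adapted to the regime-switching jump setting. Fix an initial condition $(x,i)\in\R^{n}\times\M$. By the assumed $p$th moment exponential stability there is $\delta>0$, independent of $(x,i)$, with $\limsup_{t\to\infty}\frac1t\log\E[|X(t)|^{p}]\le-\delta$; fix any $\lambda\in(0,\delta)$, so that $\E[|X(t)|^{p}]\le e^{-\lambda t}$ for all $t$ beyond some $T_{0}$. The goal is to prove $\limsup_{t\to\infty}\frac1t\log|X(t)|<0$ a.s., which is precisely almost sure exponential stability; note that the convention $\log 0=-\infty$ keeps the argument below valid even without invoking the nonzero property, so Assumption \ref{assump-jump} is not needed here.

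The core of the proof is a one-step moment bound on a fixed mesh of width $h>0$: there is a constant $C=C(h)$, independent of $k$, with
\[
\E\Bigl[\sup_{kh\le t\le(k+1)h}|X(t)|^{p}\Bigr]\le C\,\E\bigl[|X(kh)|^{p}\bigr].
\]
To establish this I would use the time-homogeneous Markov property of $(X,\al)$ to reduce matters to the uniform bound $\E_{y,j}[\sup_{0\le t\le h}|X(t)|^{p}]\le C|y|^{p}$ and then integrate against the law of $(X(kh),\al(kh))$. Expanding $|X(t)|^{p}$ along \eqref{sw-jump-diffusion} and estimating each term separately, I would control the drift contribution by H\"older's inequality, the Brownian stochastic integral by the Burkholder--Davis--Gundy inequality, and the compensated jump integral by Kunita's first inequality, which for $p\ge2$ produces both a $\bigl(\int|\gamma|^{2}\nu\bigr)^{p/2}$ term and an $\int|\gamma|^{p}\nu$ term. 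The first is handled by the growth bound in \eqref{eq2-ito-condition} and the second by the extra hypothesis \eqref{eq-pth-moment-as-stable}; the linear growth of $b$ and $\sigma$ follows from Assumptions \ref{assump-0} and \ref{assump-ito}, and since $\M$ is finite all constants are uniform over the regimes. A Gronwall argument then closes the estimate. This is where $p\ge2$ and \eqref{eq-pth-moment-as-stable} are genuinely used.

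Granting the one-step bound, combining it with $\E[|X(kh)|^{p}]\le e^{-\lambda kh}$ (valid once $kh\ge T_{0}$) gives $\E[\sup_{kh\le t\le(k+1)h}|X(t)|^{p}]\le Ce^{-\lambda kh}$. Markov's inequality then yields
\[
\P\Bigl\{\sup_{kh\le t\le(k+1)h}|X(t)|^{p}>e^{-\lambda kh/2}\Bigr\}\le C\,e^{-\lambda kh/2},
\]
and since these probabilities are summable in $k$, the Borel--Cantelli lemma shows that almost surely $\sup_{kh\le t\le(k+1)h}|X(t)|^{p}\le e^{-\lambda kh/2}$ for all sufficiently large $k$. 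For $t\in[kh,(k+1)h]$ this forces $\frac1t\log|X(t)|\le-\frac{\lambda}{2p}\cdot\frac{kh}{t}$, and because $kh/t\to1$ as $t\to\infty$ we conclude $\limsup_{t\to\infty}\frac1t\log|X(t)|\le-\frac{\lambda}{2p}<0$ a.s.; as this rate is independent of $(x,i)$, it yields almost sure exponential stability in the sense of the definition.

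I expect the main obstacle to be the one-step moment estimate, and within it the jump term. Because the L\'evy measure is only assumed to satisfy \eqref{levy-measure} and may be infinite, one cannot use the ordinary It\^o isometry and must rely on the jump form of the Burkholder--Davis--Gundy inequality; the splitting into the $\int|\gamma|^{2}\nu$ and $\int|\gamma|^{p}\nu$ pieces is exactly what makes both \eqref{eq2-ito-condition} and \eqref{eq-pth-moment-as-stable} necessary. The regime switching itself causes little difficulty, since $\al$ enters only through the coefficients and $\M$ is finite, so all growth constants can be taken uniform in the regime.
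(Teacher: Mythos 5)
Your proposal is correct and follows essentially the same route as the paper's proof: a mesh-interval supremum estimate obtained by decomposing the solution and applying H\"older, Burkholder--Davis--Gundy, and Kunita's first inequality (with \eqref{eq2-ito-condition} and \eqref{eq-pth-moment-as-stable} handling the two jump pieces), followed by Chebyshev and Borel--Cantelli. The only cosmetic difference is that the paper closes the one-step estimate by choosing the mesh width small enough to absorb the $\E[\sup|X|^{p}]$ term into the left-hand side rather than by Gronwall, and it upgrades the moment decay to all $t\ge0$ via the moment bound of \cite{ZhuYB-15}, neither of which changes the argument in substance.
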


The proof of Theorem \ref{thm-moment-as-stable} is very similar to the proofs of Theorem 4.2 of \cite{ApplS-09} and Theorem 5.9 of 
  \cite{MaoY} and  is  deferred to   Appendix \ref{sect-appendix}. Note that Theorem 4.4  in \cite{ApplS-09} requires a condition (Assumption 4.1) similar to \eqref{eq-pth-moment-as-stable} to hold for all $q \in [2,  p]$. Here we observe that it is enough to have  \eqref{eq-pth-moment-as-stable}   for a single $p$, as long as $p \ge 2$. Also notice that  in the special case when $p=2 $, then  \eqref{eq-pth-moment-as-stable} is already contained in Assumption \ref{assump-ito}. 

\subsection{Criteria for Stability Using $M$-Matrices}\label{sect:M-matrix}
In this subsection, we assume that in \eqref{Q-gen}, $Q(x) = Q\in \R^{m\times m}$, a constant matrix. Consequently, the switching component $\al(\cdot)$ in \eqref{sw-jump-diffusion}  is a continuous-time Markov chain. Let us also assume   \begin{assumption}\label{assump-m-matrix} There exist a positive number $p> 0$ and
a positive definite matrix $G\in \S^{n\times n}$ such that for all $x\neq 0$ and $i \in \M$, we have  \begin{align}
\label{eq1-moment-stable-M}
& \lan Gx, b(x,i) \ran + \frac{1}{2} \lan \sigma(x,i), G \sigma(x,i) \ran \le \varrho_{i} \lan x, Gx\ran,
\\ \label{eq2-moment-stable-M}
& (\lan x, G\sigma(x,i) \ran)^{2} \begin{cases}
\le \delta_{i}   (\lan x, G x \ran)^{2},     & \text{ if } p \ge 2, \\
  \ge     \delta_{i}   (\lan x, G x \ran)^{2},     & \text{ if } 0 < p  < 2,
\end{cases}
\intertext{and}\label{eq3-moment-stable-M}
& \int_{\R_{0}^{n}}\biggl[ \biggl( \frac{\lan x+\gamma(x,i,z), G (x+ \gamma(x,i,z))\ran}{\lan x, Gx\ran}\biggr)^{\frac p2} - 1 -\frac{p\lan \gamma(x,i,z), Gx\ran}{\lan x, Gx\ran} \biggr] \nu(\d z) \le \lambda_{i},
\end{align} where $\varrho_{i}, \delta_{i},$ and $\lambda_{i}$, $i\in \M$ are constants.   \end{assumption}
Corresponding to the infinitesimal generator  $Q$ of \eqref{Q-gen} and $p> 0$,   $G\in \S^{n\times n}$ in Assumption \ref{assump-m-matrix}, we define an $m\times m$ matrix
\begin{equation}
\label{eq-matrix-A(pG)}
\A: =\A(p, G) = \textrm{diag}(\theta_{1}, \dots, \theta_{m}) - Q,
\end{equation} where $\theta_{i}: = -[p\varrho_{i}  + p (p-2) \delta_{i} + \lambda_{i} ]  $,    $ i =1, \dots, m. $
\begin{thm}\label{thm-moment-stab-m-matrix}
Suppose  Assumptions \ref{assump-0}, \ref{assump-ito}, \ref{assump-jump}, and  \ref{assump-m-matrix} hold and that the matrix $\A$ defined in \eqref{eq-matrix-A(pG)} is a nonsingular $M$-matrix,   then    the equilibrium point of  \eqref{sw-jump-diffusion}  is  exponentially stable in the $p$th moment. In addition, if  
either $p\in (0, 2]$ or $p > 2$ with \eqref{eq-pth-moment-as-stable} valid, then  then    the equilibrium point of  \eqref{sw-jump-diffusion}  is a.s. exponentially stable. 
\end{thm}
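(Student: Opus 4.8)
The plan is to manufacture an explicit Lyapunov function out of the $M$-matrix hypothesis and then feed it into Theorems \ref{thm-moment-stab} and \ref{thm-moment-as-stable}. First I would invoke the standard characterization of nonsingular $M$-matrices: because $\A = \A(p,G)$ is a nonsingular $M$-matrix, there is a vector $\xi = (\xi_1,\dots,\xi_m)' \gg 0$ with $\eta := \A\xi \gg 0$, i.e.\ $\eta_i = (\A\xi)_i > 0$ for every $i\in\M$. With this $\xi$ I would take the candidate
\[ V(x,i) := \xi_i \lan x, Gx\ran^{p/2}, \qquad (x,i)\in\R^n\times\M. \]
Since $G$ is positive definite, condition (i) of Theorem \ref{thm-moment-stab} holds with $c_1 = (\min_i \xi_i)\,\lambda_{\min}(G)^{p/2}$ and $c_2 = (\max_i \xi_i)\,\lambda_{\max}(G)^{p/2}$.

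The core step is to verify condition (ii), $\op V(x,i) \le -c_3 V(x,i)$ for some $c_3 > 0$. Writing $U = \lan x, Gx\ran$, I would compute $DV(x,i) = \xi_i p\,U^{p/2-1}Gx$ and $D^2V(x,i) = \xi_i p(p-2)U^{p/2-2}(Gx)(Gx)' + \xi_i p\,U^{p/2-1}G$ and substitute into the definition of $\op$. Using \eqref{eq1-moment-stable-M} to control the drift together with the $G$-trace term, \eqref{eq2-moment-stable-M} to control the remaining second-order term (where the case split is precisely what keeps the sign of the $p(p-2)$ factor under control in both the regime $p\ge2$ and $0<p<2$), and \eqref{eq3-moment-stable-M} to collapse the compensated jump integral, the non-switching part of $\op V(x,i)$ would be bounded above by $-\theta_i\,\xi_i\,U^{p/2}$ with $\theta_i$ as in \eqref{eq-matrix-A(pG)}. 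The switching sum telescopes, using $\sum_{j\in\M}q_{ij}=0$, to exactly $(Q\xi)_i U^{p/2}$. Collecting the two contributions gives
\[ \op V(x,i) \le -\theta_i\xi_i\,U^{p/2} + (Q\xi)_i\,U^{p/2} = -(\A\xi)_i\,U^{p/2} = -\frac{\eta_i}{\xi_i}\,V(x,i) \le -c_3\,V(x,i), \]
with $c_3 := \min_{i\in\M}\eta_i/\xi_i > 0$.

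Once (i) and (ii) are in place, part (a) of Theorem \ref{thm-moment-stab} yields $p$th moment exponential stability at once. For the almost sure statement I would split on $p$: if $p\in(0,2]$, part (b) of Theorem \ref{thm-moment-stab} applies directly; if $p>2$, then having already obtained $p$th moment exponential stability I would combine it with the integrability hypothesis \eqref{eq-pth-moment-as-stable} and invoke Theorem \ref{thm-moment-as-stable} to upgrade to almost sure exponential stability.

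The hard part will not be the algebra but the smoothness of $V$ at the origin: when $0<p<2$ the map $x\mapsto \lan x, Gx\ran^{p/2}$ fails to be $C^2$ at $x=0$, so Theorem \ref{thm-moment-stab} cannot be quoted verbatim. This is exactly the reason Assumption \ref{assump-jump} is imposed here. Indeed, by Lemma \ref{lem-0-inaccessible} the solution started at $x\neq 0$ never reaches the origin, and by \eqref{eq-no-jump-to-0} the post-jump states $x + \gamma(x,i,z)$ are nonzero as well; hence $V$ only ever needs to be $C^2$ on $\R^n_0$, where it is smooth, and the generalized It\^o/supermartingale argument behind Theorem \ref{thm-moment-stab} can be run along the paths with localizing stopping times as in Lemma \ref{lem-0-inaccessible} and the proof of Theorem \ref{thm-moment-stab}, without ever evaluating $V$ at the origin.
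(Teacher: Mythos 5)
Your proposal is correct and follows essentially the same route as the paper: the same $M$-matrix characterization producing a vector $\xi\gg 0$ with $\A\xi\gg 0$, the same Lyapunov function $V(x,i)=\xi_i\lan x,Gx\ran^{p/2}$, the same use of \eqref{eq1-moment-stable-M}--\eqref{eq3-moment-stable-M} to verify condition (ii) of Theorem \ref{thm-moment-stab} with $c_3=\min_i (\A\xi)_i/\xi_i$, and the same case split ($p\in(0,2]$ via Theorem \ref{thm-moment-stab}(b), $p>2$ via Theorem \ref{thm-moment-as-stable}) for the almost sure conclusion. Your closing observation that $x\mapsto\lan x,Gx\ran^{p/2}$ fails to be $C^2$ at the origin when $0<p<2$, and that this is repaired by Lemma \ref{lem-0-inaccessible} together with Assumption \ref{assump-jump}, is a point the paper's own proof passes over silently and is a worthwhile refinement rather than a divergence.
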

Recall from \cite{MaoY} that a square matrix $A= (a_{ij})\in \R^{n\times n}$ is a nonsingular $M$-matrix if $A$ can be expressed in the form $A= s I- G$ with some $G\ge0$ and $s> \rho(G)$, where $I$ is the identity matrix and $\rho(G)$ denotes the spectral radius of $G$. 
\begin{proof} Since $\A$ of \eqref{eq-matrix-A(pG)} is a nonsingular $M$-matrix, by Theorem 2.10 of \cite{MaoY}, there exists a vector $(\beta_{1}, \dots, \beta_{m})' \gg 0$ such that $(\bar \beta_{1}, \dots, \bar \beta_{m})' : = \A (\beta_{1}, \dots, \beta_{m})' \gg 0 $. Componentwise, we can write
\begin{displaymath}
\bar \beta_{i} : = \theta_{i} \beta_{i} - \sum_{j\in \M} q_{ij} \beta_{j} > 0, \quad i \in \M.
\end{displaymath}
Define $V(x,i) : = \beta_{i} (x' Gx)^{\frac p2}$ for $(x,i)\in \R^{n}\times \M$. Then condition (i) of Theorem \ref{thm-moment-stab} is trivially satisfied. Moreover, we can use Assumption \ref{assump-m-matrix} to compute
\begin{align*}
 \mathcal L V(x,i) & = \beta_{i} p (x' Gx)^{\frac p2 - 1} \lan b(x,i), Gx\ran  + \sum_{j=1}^{m} q_{ij} \beta_{j}  (x' Gx)^{\frac p2}
  \\ & \quad+ \frac12 \beta_{i} p (x' Gx)^{\frac p2-2} \tr\big(\sigma(x,i) \sigma'(x,i) [(p-2) Gxx'G + x' Gx G] \big)\\
 & \quad+  \beta_{i} \int_{\R_{0}^{n}} \Big[ (\lan x+ \gamma(x,i,z), G (x+ \gamma(x,i,z))\ran)^{\frac p2} - (x' Gx)^{\frac p2}  \\
   & \qquad \qquad \qquad \qquad \qquad  - p (x' Gx)^{\frac p2 - 1} \lan \gamma(x,i,z), Gx\ran\Big] \nu(\d z) \\
   & \leq  \sum_{j=1}^{m} q_{ij} \beta_{j}  (x' Gx)^{\frac p2}  + \beta_{i}  \bigl[ p\varrho_{i}  + p (p-2) \delta_{i} + \lambda_{i}  \bigr]  (x' Gx)^{\frac p2} \\
   & = \Biggl[ \sum_{j=1}^{m} q_{ij} \beta_{j}  - \theta_{i}\beta_{i} \Biggr] (x' Gx)^{\frac p2}   
    = -\bar \beta_{i}  (x' Gx)^{\frac p2}  \le - \varsigma V(x,i),
\end{align*}
where $\varsigma =  \min_{1 \le i \le m} \frac{\bar \beta_{i}}{\beta_{i}} > 0$. This verifies condition (ii) of  Theorem \ref{thm-moment-stab}.
Therefore by Theorem \ref{thm-moment-stab}, part (a), we conclude  that
 the equilibrium point of \eqref{sw-jump-diffusion} is exponentially stable in the $p$th moment. 
 
  The     assertion on a.s. exponential stability follows from   Theorem \ref{thm-moment-stab} part (b) for the case $p\in (0,2]$ and Theorem \ref{thm-moment-as-stable} for the case $p > 2$. \end{proof}

\section{Stability of Linear Markovian Regime-Switching Jump Diffusion  Systems}\label{sect-linear}
In this section, we consider a linear regime-switching jump diffusion
\begin{equation}
\label{linearsystem}
\begin{aligned}
\d X(t)  =  A(\al(t)) X(t)  \d t +    B(\al(t)) X(t)  \d W( t)
   + \int_{\R^{n}_{0}} C(\al(t-),z) X(t-)  \tilde N(\d t, \d z),
\end{aligned}\end{equation} where   
$\al(\cdot) $ is an irreducible    continuous-time Markov chain taking values in $\M = \{ 1, \dots, m\}$. Consequently we assume that $q_{ij}(\cdot)$ in \eqref{Q-gen} are constants for all $i,j \in \M$.
 In addition, unless otherwise mentioned, we assume that $\al(\cdot)$ has an invariant distribution $\pi = (\pi_{i}, i\in \M)$ throughout the section.
  In \eqref{linearsystem},
 for each $i \in \M$ and $z\in \R^{n}_{0}$, $A_{i} = A(i), B_{i}= B(i)$ and $C_{i}(z) = C(i,z)$ are $n\times n$ matrices satisfying the following condition
   \begin{equation}
\label{eq0-Ci(z)-cond}\begin{aligned}
 & \max_{i\in \M}\int_{\R_{0}^{n}}\bigl[ | C_{i}(z)|^{2}+ | C_{i}(z)|\bigr]\nu(\d z) < \infty,  \text{ and } \\
 & \lan \xi, (I  + C_{i}(z)')(I+C_{i}(z))\xi \ran \ge \varrho^{2}|\xi|^{2},   \text{ for all }\xi \in \R^{n} \text{ and }\nu\text{-almost all } z\in \R^{n}_{0},                        
\end{aligned}\end{equation}  where $\varrho$ is a positive constant. Apparently \eqref{linearsystem} satisfies Assumption \ref{assump-0}. In addition, the first equation of \eqref{eq0-Ci(z)-cond} guarantees that Assumption   \ref{assump-ito} is satisfied as well. Finally, since $|x+ C_{i}(z) x| = |(I+C_{i}(z)) x| = |\lan x, (I  + C_{i}(z)')(I+C_{i}(z))x \ran|^{\frac12} $,   the uniform ellipticity condition on the matrix $(I  + C_{i}(z)')(I+C_{i}(z))$ in the second equation of  \eqref{eq0-Ci(z)-cond} implies that Assumption \ref{assump-jump} holds.

We will deal with almost sure exponential stability in Section \ref{subsect:as-stab-linear} and moment exponential stability in Section \ref{subsect:pth-moment-stab-linear}. In both sections, we will treat   one-dimensional and multidimensional systems separately. Finally Section \ref{sect:examples} presents several examples. 

\subsection{Almost Sure Exponential Stability}\label{subsect:as-stab-linear}
\subsubsection{One Dimensional System} Let us first consider   the one-dimensional regime-switching jump diffusion
 \begin{equation}
\label{eq-SDE-1}
\begin{aligned}
\d x(t)  =  a(\alpha(t)) x(t)  \d t +    b(\alpha(t)) x(t)  \d W( t)
   + \int_{\R_{0}} c(\alpha(t-),z) x(t-)  \tilde N(\d t, \d z).
\end{aligned}\end{equation} 
Suppose for each $i\in \M$, $a_{i} = a(i)$, $b_{i} =b(i)$ are real numbers,  and $c_{i}(\cdot) = c(i,\cdot)$  is a measurable function from $\R_{0}$ to $\R-\{-1\}$ satisfying \eqref{eq0-Ci(z)-cond}. Notice that \eqref{eq-SDE-1} can be regarded as an extended jump type Black-Scholes model with regime switching; this is motivated by the jump diffusion models   in   \cite{Cont-Tankov,cont2005} as well as the regime-switching models as in \cite{BW87,Zhang}. %
 \begin{prop}\label{prop-as-exp-stable-1d}    Suppose  \begin{equation}
\label{eq-1d-nu-condition}
 \max_{i\in \mathcal{M}} \biggl\{ \int_{\R_0}\big(\log|1+c_i(z)|)^2\nu(\d z) + \int_{\R_0}\bigl|\log|1+c_i(z)|-c_i(z)\bigr| \nu(\d z) \biggr\} <\infty,
\end{equation}then the solution to \eqref{eq-SDE-1} satisfies the following property:
 \begin{equation*}\label{eq-Ci(z)-cond}
\aligned
\lim_{t\rightarrow+\infty}\frac{1} {t}\log|x(t)|=\delta: = \sum_{i=1}^m\pi_i\biggl[a_i-\frac12b^2_i+\int_{\R_0}\big(\log|1+c_i(z)|-c_i(z)\big)\nu(\d z)\biggr]\;\text{ a.s.}
\endaligned
\end{equation*}
In particular,  the equilibrium point of \eqref{eq-SDE-1} is almost surely exponentially stable if and only if $\delta<0$.
\end{prop}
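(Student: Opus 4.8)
The plan is to reduce everything to the scalar process $\log|x(t)|$ via the generalized It\^o formula and then read off the Lyapunov exponent through an ergodic average together with a strong law of large numbers for the martingale parts. First I would invoke Lemma \ref{lem-0-inaccessible} to guarantee that $x(t)\neq 0$ for all $t\ge 0$ almost surely; its hypotheses hold because, as noted after \eqref{eq0-Ci(z)-cond}, the linear system \eqref{eq-SDE-1} satisfies Assumptions \ref{assump-0}, \ref{assump-ito}, and \ref{assump-jump}. This makes $V(x):=\log|x|$ well defined along the trajectory. Applying the generator $\op$ to $V$ and using that the jump $\gamma(x,i,z)=c_i(z)x$ sends $x\mapsto (1+c_i(z))x$, a direct computation with $V'(x)=1/x$ and $V''(x)=-1/x^2$ gives
\begin{equation*}
\op V(x,i)=a_i-\tfrac12 b_i^2+\int_{\R_0}\bigl[\log|1+c_i(z)|-c_i(z)\bigr]\nu(\d z)=:g(i),
\end{equation*}
which, crucially, is independent of $x$ because of the linear structure; the second integral in \eqref{eq-1d-nu-condition} ensures $g(i)$ is finite.

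Next I would write down the It\^o decomposition. Because $V$ does not depend on the discrete component, the switching martingale vanishes identically, and we obtain
\begin{equation*}
\log|x(t)|=\log|x(0)|+\int_0^t g(\al(s))\,\d s+M_1(t)+M_3(t),
\end{equation*}
where $M_1(t)=\int_0^t b_{\al(s)}\,\d W(s)$ and $M_3(t)=\int_0^t\int_{\R_0}\log|1+c_{\al(s-)}(z)|\,\tilde N(\d s,\d z)$. Since $\log|\cdot|\notin C^2_c(\R)$, this identity must first be established on the stochastic interval $[0,t\wedge\tau_\e\wedge\tau_R]$, with $\tau_\e,\tau_R$ the exit times from $\{\e<|x|<R\}$, and the localization is then removed by letting $\e\downarrow 0$ and $R\uparrow\infty$, using the nonzero property and the absence of finite explosion exactly as in the proof of Lemma \ref{lem-0-inaccessible}.

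Dividing by $t$, the term $t^{-1}\log|x(0)|\to 0$, and the ergodic theorem for the irreducible chain $\al$ (as already used in the proof of Corollary \ref{coro-as-exp-stable}) yields $t^{-1}\int_0^t g(\al(s))\,\d s\to\sum_{i}\pi_i g(i)=\delta$ almost surely. It therefore remains to show $t^{-1}M_1(t)\to 0$ and $t^{-1}M_3(t)\to 0$ almost surely. For this I would bound the predictable quadratic variations: $\lan M_1\ran_t\le(\max_i b_i^2)\,t$, and, by the first integral in \eqref{eq-1d-nu-condition}, $\lan M_3\ran_t=\int_0^t\int_{\R_0}(\log|1+c_{\al(s)}(z)|)^2\nu(\d z)\,\d s\le\bigl(\max_i\int_{\R_0}(\log|1+c_i(z)|)^2\nu(\d z)\bigr)t$. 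Both grow at most linearly, so $\int_0^\infty(1+s)^{-2}\,\d\lan M_k\ran_s<\infty$, and the strong law of large numbers for locally square-integrable martingales gives $M_k(t)/t\to 0$. Combining the three limits yields the claimed exact value $\delta$; the stated equivalence then follows at once, since $\delta<0$ forces $\limsup_{t}t^{-1}\log|x(t)|=\delta<0$, while a.s. exponential stability forces the limit to be negative.

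The step I expect to be most delicate is the strong law for the jump martingale $M_3$: unlike the continuous part $M_1$, it is purely discontinuous and possibly infinitely active, so the almost sure convergence $M_3(t)/t\to 0$ must be extracted from the linear growth of $\lan M_3\ran_t$ via a martingale SLLN valid for compensated Poisson integrals (in the spirit of \cite{ApplS-09,MaoY}). Verifying that \eqref{eq-1d-nu-condition} supplies exactly the integrability needed both for $g(i)$ to be finite and for $\lan M_3\ran_t$ to be controlled is the crux of the argument.
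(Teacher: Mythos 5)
Your proposal is correct and follows essentially the same route as the paper's proof: It\^o's formula applied to $\log|x(t)|$, the observation that the drift $a_i-\frac12 b_i^2+\int_{\R_0}[\log|1+c_i(z)|-c_i(z)]\nu(\d z)$ depends only on the regime, linear bounds on the quadratic variations of the Brownian and compensated-Poisson martingales so that the strong law of large numbers for martingales kills them after division by $t$, and the ergodic theorem for the chain to identify the limit $\delta$. Your added care about localizing It\^o's formula (since $\log|\cdot|$ is not $C^2_c$) is a detail the paper passes over silently but does not change the argument.
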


\begin{proof} As in the proof of Theorem \ref{thm-as-exp-stable}, we need only to consider the case when $x(t) \neq 0$ for all $t \ge 0$ with probability 1.
 Let $x(0) = x\not =0$ and $\al(0) = i\in \M$.  Then by It\^o's formula we have
\begin{equation*}\label{wk1}
\aligned
 \log|x(t)| &  =  \log |x| +
   \int_0^t\biggl[a(\alpha(s))-\frac{1}{2}b^2(\alpha(s))\\ & \qquad\qquad \qquad \hfill +\int_{\R_0}[\log|1+c(\alpha(s-),z)|-c(\alpha(s-),z)]\nu(\d z)\biggr]\d s
        + M_{1}(t) + M_{2}(t),
\endaligned
\end{equation*}
where \begin{align*}
M_1(t)  = \int_0^t b(\alpha(s))\d W(s),  \text{ and }
M_{2}(t)  = \int_0^t\int_{\R_0}\log|1+c(\alpha(s-),z)|\widetilde{N}(\d s,\d z).
\end{align*}

Obviously $M_1$ 
is a  martingale vanishing at 0 with quadratic variation
$
\langle M_1, M_1\rangle_{t}=\int_0^tb^2(\alpha(s))\d s\leq t \max_{i\in \M} b_{i}^{2}.
$
On the other hand, 
\eqref{eq-1d-nu-condition}  implies that $M_{2}$ is a martingale vanishing at $0$. 
In addition, the quadratic variation of $M_{2}$ is given by
\[\aligned
\langle M_2, M_2\rangle_{t}=\int_0^t\int_{\mathbb{R}_0}(\log|1
 + c(\alpha(s-),z)|)^{2}\nu(\d z)\d s\le Kt,
\endaligned
\] where $K= \max_{i\in \M} \int_{\R_{0}} (\log |1+c_{i}(z)|)^{2}\nu (\d z) < \infty$.
Therefore we can apply  the strong law of large numbers for martingales (see, for example,  \cite[Theorem 1.6]{MaoY}) to conclude
\[
\lim_{t\rightarrow+\infty}\frac{M_1(t)}t=0 \,\,\; \text{and}\,\;\lim_{t\rightarrow+\infty}\frac{M_2(t)}t=0\; \text{ a.s.} \]
Then the ergodic theorem for continuous-time Markov chain  leads to the desired assertion.
\end{proof}

\subsubsection{Multidimensional Systems} Let us now focus on the multidimensional system  \eqref{linearsystem}.
As before, we
suppose that  the discrete component $\al$  in \eqref{linearsystem} and \eqref{eq:jump-mg} is an irreducible continuous-time Markov chain with  an invariant distribution $\pi = (\pi_{i}, i \in \M)$.
For notational simplicity, define the column
vector $\mu=(\mu_1,\mu_2,\dots,\mu_m)'\in\R^{m}$ with
\begin{equation}
\label{eq-mu-i-defn}
 \mu_i := \mu_{i}(G)
={1 \over 2}\lambda_{\max} (GA_i G^{-1} + G^{-1}A_i'G + G^{-1} B_{i}' G^{2} B_{i} G^{-1}),
\end{equation} where $G\in \S^{n\times n}$
is a positive definite matrix.  Also let
 \begin{equation}
\label{eq-beta-defn}
 \beta:=-\pi\mu= - \sum^m_{i=1} \pi_i \mu_i.
\end{equation}
Then it follows from  Lemma A.12 of \cite{YZ-10}
 that the equation \begin{equation}
\label{eq-zeta-defn}
  Q \zeta= \mu +
\beta\one
\end{equation}  has a solution $\zeta=(\zeta_1,\zeta_2,\dots,\zeta_m)'\in\R^{m}$, where $\one: = (1,1,\dots, 1)' \in \R^{m}$.
Thus we   have from \eqref{eq-zeta-defn} that \begin{equation}\label{eq-com-0}\mu_i- \sum_{j=1}^m
  q_{ij}\zeta_j = -\beta,\quad i\in \M.\end{equation}
  
Before we state the main result of this section, let us introduce some more notation.  If $A$ is a square matrix, then $\rho(A)$ denotes the spectral radius of $A$.  Furthermore, if $A$ is symmetric, we denote
\begin{align}
\label{eq-lambda-hat-defn}
 &   \widehat{\lambda}(A):= 
 \begin{cases}
   \lambda_{\max}(A), & \hbox{if } \lambda_{\max}(A)<0, \\
0\vee\lambda_{\min}(A),  &  \hbox{otherwise}.
  \end{cases}\end{align}

\begin{prop}\label{prop-exp-stab-creterion-b}
The  trivial solution of   \eqref{linearsystem} is a.s. exponentially stable if  there exist  a positive definite matrix $G\in \S^{n\times n}$, positive numbers $h_i $ and 
$p$ such that $h_i- p \zeta_{i} >0$ for each $i\in \M$ and that
\begin{equation}\label{eq-exp-stab-creterion}
\sum_{i\in \M} \pi_{i}  [c_{2}(i) -0.5c_3(i) - c_{4}(i)  -c_{5}(i)]< 0,
\end{equation} where 
\begin{align}
\nonumber c_{2}(i)&: =  p\mu_i +\frac{p-2}8 {\Lambda}^2(GB_iG^{-1}+G^{-1}B_i'G)- \frac{p}{h_i-p\zeta_{i}} (\mu_{i}+\beta)+ \frac{1}{h_i-p\zeta_i}\sum_{j\in \M} q_{ij} h_{j}+ \eta_{i} ,
\\
\nonumber c_{3}(i)&: =\frac{ p^2}4\widehat{\lambda}^2(GB_iG^{-1}+G^{-1}B_i'G),\\
\label{eq-c2-5}  c_{4}(i)& : =  -  \biggl\{ 0\wedge\int_{\R_{0}^{n}}   \biggl[\frac{p}{2}\log{\lambda_{\max}(G^{-1}(I+ C_{i}(z))' G^{2} (I + C_{i}(z))G^{-1})}  \\
\nonumber  & \qquad \qquad\qquad \ - \bigl(  \lambda_{\min}(G^{-1}(I+ C_{i}(z))' G^{2} (I + C_{i}(z))G^{-1})\bigr)^{\frac{p}{2}}+1\biggr]\nu(\d z)\biggr\},  \\
\nonumber    c_{5}(i)& : = -  \sum_{j\in \M} q_{ij} \left(\log (h_j- p \zeta_{j})    - \frac{h_j-p \zeta_{j}}{h_i- p \zeta_{i}}\right),
   \end{align}
  in which $\mu_{i}$, $\beta$, and $\zeta_{i}$ are defined in \eqref{eq-mu-i-defn}, \eqref{eq-beta-defn}, and \eqref{eq-zeta-defn}, respectively, and
\begin{align}
  \label{eq-rho-i-defn} & \eta_{i}: = \int_{\R_{0}^{n}} \Bigl[   \bigl (  {\lambda_{\max}( G^{-1} (I + C_{i}(z) )' G^{2} (I + C_{i}(z)) G^{-1}}\bigr)^{\frac{p }{2}} - 1 \\
 \nonumber   & \qquad  \qquad\qquad
-   \frac{p}{2}  \lambda_{\min} (G C_{i}(z) G^{-1} + G^{-1} C_{i}'(z) G)\Bigr] \nu(\d z), \\
 &\label{eq-Lambda-defn}  \Lambda(GB_iG^{-1}+G^{-1}B_i'G):= \begin{cases}
     \widehat{\lambda}(GB_iG^{-1}+G^{-1}B_i'G),  & \text{ if } 0 < p \le 2, \\
     \rho (GB_iG^{-1}+G^{-1}B_i'G),  &  \text{ if } p >  2.
\end{cases}
\end{align}  In the above, we require that the integrals with respect to $\nu$ in \eqref{eq-c2-5} and \eqref{eq-rho-i-defn} are well-defined.
\end{prop}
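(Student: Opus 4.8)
The plan is to exhibit an explicit Lyapunov function for the linear system \eqref{linearsystem} and then apply Corollary \ref{coro-as-exp-stable}, whose right-hand side is exactly the quantity appearing in \eqref{eq-exp-stab-creterion}. The change of coordinates suggested by the definitions \eqref{eq-mu-i-defn}, \eqref{eq-c2-5}, and \eqref{eq-rho-i-defn} is $y=Gx$, under which the conjugated coefficients $\tilde A_i:=GA_iG^{-1}$, $\tilde B_i:=GB_iG^{-1}$, and $\tilde C_i(z):=GC_i(z)G^{-1}$ appear throughout. Accordingly I would take
$$V(x,i):=(h_i-p\zeta_i)\,|Gx|^p=(h_i-p\zeta_i)\,(x'G^2x)^{p/2},\qquad (x,i)\in\R^n_0\times\M,$$
which is strictly positive and, for $0<p<2$, of class $C^2$ away from the origin because $h_i-p\zeta_i>0$ and $G$ is positive definite. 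As in the proof of the one-dimensional Proposition \ref{prop-as-exp-stable-1d}, the nonzero property of Lemma \ref{lem-0-inaccessible} (applicable here since the ellipticity in \eqref{eq0-Ci(z)-cond} implies Assumption \ref{assump-jump}) lets us restrict to paths that stay in $\R^n_0$, so $C^2$-regularity away from $0$ suffices. Condition (i) of Theorem \ref{thm-as-exp-stable} then holds with $c_1(i)=(h_i-p\zeta_i)\lambda_{\min}(G)^p>0$, and it remains to verify (ii)--(v) and read off the constants.

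The core of the argument is the computation of $\op V/V$, i.e.\ condition (ii). Writing everything in terms of $y=Gx$ and using $y'\tilde B_iy=\tfrac12\,y'(\tilde B_i+\tilde B_i')y$, the drift together with the isotropic part of the diffusion collapses to $\tfrac p2\,y'(\tilde A_i+\tilde A_i'+\tilde B_i'\tilde B_i)y/|y|^2$, which is bounded above by $p\mu_i$ since the matrix inside is exactly the one in \eqref{eq-mu-i-defn}. The rank-one part of the diffusion produces a term proportional to $(y'(\tilde B_i+\tilde B_i')y)^2/|y|^4$; bounding it by its supremum (via $\rho$) when $p>2$ and by its infimum (via $\widehat\lambda$ of \eqref{eq-lambda-hat-defn}) when $0<p\le 2$ yields the $\Lambda^2$-contribution specified in \eqref{eq-Lambda-defn}. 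The switching sum $\sum_j q_{ij}(h_j-p\zeta_j)$ is rewritten through the Poisson equation \eqref{eq-com-0}, namely $\sum_j q_{ij}\zeta_j=\mu_i+\beta$, which is precisely why $\zeta$ and the compatibility choice $\beta=-\pi\mu$ (guaranteeing solvability of \eqref{eq-zeta-defn}) were introduced; this produces the terms $-\frac{p}{h_i-p\zeta_i}(\mu_i+\beta)+\frac{1}{h_i-p\zeta_i}\sum_j q_{ij}h_j$. Finally the jump integrand is estimated by replacing $|(I+\tilde C_i(z))y|^p/|y|^p$ with its maximum $(\lambda_{\max}((I+\tilde C_i(z))'(I+\tilde C_i(z))))^{p/2}$ and the linear correction with its minimum, which gives $\eta_i$ as in \eqref{eq-rho-i-defn}; here the well-definedness of the $\nu$-integrals and \eqref{eq0-Ci(z)-cond} are used. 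Summing these pieces yields $\op V(x,i)\le c_2(i)V(x,i)$.

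Conditions (iii)--(v) are comparatively direct. For (iii), the diffusion coefficient of the martingale part of $\log V$ equals $\tfrac{p^2}4(y'(\tilde B_i+\tilde B_i')y)^2/|y|^4$, whose infimum over $|y|=1$ is $\widehat\lambda^2(GB_iG^{-1}+G^{-1}B_i'G)$; a short case check on the signs of $\lambda_{\min}$ and $\lambda_{\max}$ shows this is exactly what \eqref{eq-lambda-hat-defn} computes, giving $c_3(i)$. For (iv), I would combine the elementary inequality $\log\xi\le\xi-1$ with the same $\lambda_{\max}$/$\lambda_{\min}$ estimates; the truncation $0\wedge(\cdot)$ in \eqref{eq-c2-5} ensures $c_4(i)\ge 0$, while the integrand is automatically nonpositive. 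Condition (v) holds with equality: the $p\log|Gx|$ contributions cancel because $\sum_j q_{ij}=0$, leaving precisely $-c_5(i)$. With (i)--(v) in hand and $\al$ irreducible with invariant distribution $\pi$, Corollary \ref{coro-as-exp-stable} gives $\limsup_{t\to\infty}\frac1t\log|X(t)|\le\frac1p\sum_{i\in\M}\pi_i[c_2(i)-0.5c_3(i)-c_4(i)-c_5(i)]$, which is negative by \eqref{eq-exp-stab-creterion}; hence the trivial solution is a.s.\ exponentially stable.

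I expect the main obstacle to be condition (ii): identifying the correct eigenvalue and spectral-radius bounds for each matrix quadratic form, handling the $p>2$ versus $0<p\le 2$ dichotomy consistently between the diffusion term and the definition of $\Lambda$, and checking that the jump integrals are finite and dominated by $\eta_i$ under \eqref{eq0-Ci(z)-cond}. The conceptually delicate step is the use of the Poisson equation \eqref{eq-com-0} to recast the switching contribution in terms of $\mu_i+\beta$, since this is what ultimately makes the $\pi$-averaged quantity \eqref{eq-exp-stab-creterion} the right stability criterion.
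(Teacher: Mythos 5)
Your proposal is correct and follows essentially the same route as the paper's own proof: the same Lyapunov function $V(x,i)=(h_i-p\zeta_i)(x'G^2x)^{p/2}$, the same eigenvalue/spectral-radius bounds for conditions (ii)--(iv) (including the $p\le 2$ versus $p>2$ dichotomy for $\Lambda$), the same use of the Poisson equation \eqref{eq-com-0} to recast the switching sum, and the same final appeal to Corollary \ref{coro-as-exp-stable}. No gaps.
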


\begin{rem}\label{rem-about-zeta-choice} Note that the constants $c_{2}(i)$ and $c_{5}(i)$ in the statement of Proposition \ref{prop-exp-stab-creterion-b} actually depend on the choice of the solution $\zeta$ to  equation \eqref{eq-zeta-defn}.  Nevertheless, for notational simplicity, we write $c_{2}(i), c_{5}(i)$ instead of $c_{2}(i;\zeta), c_{5}(i;\zeta)$.  Since $Q$ is a singular matrix,  and $\pi(\mu+ \beta \one) =0$, in view of  Lemma A.12 of \cite{YZ-10}, \eqref{eq-zeta-defn} has infinitely many solutions and any two solutions $\zeta^{1}, \zeta^{2}$ of \eqref{eq-zeta-defn} satisfy $\zeta^{1}-\zeta^{2} = \varrho \one$ for some $\varrho \in \R$.  Hence
 Proposition \ref{prop-exp-stab-creterion-b} and in particular \eqref{eq-exp-stab-creterion} can be strengthened as: If
\begin{equation}
\label{eq1-exp-stab-creterion}
 \min\set{\sum_{i\in \mathcal{M}}\pi_i\big[c_2(i)-0.5 \,c_3(i)-c_4(i)-c_5(i)\big]\Big| \zeta\in \R^{m}, Q\zeta=\mu +
\beta\one}<0,
\end{equation} then   the trivial solution of   \eqref{linearsystem} is a.s. exponentially stable.
\end{rem}

The proof of Proposition \ref{prop-exp-stab-creterion-b} follows from a direct application of Theorem \ref{thm-as-exp-stable} and Corollary \ref{coro-as-exp-stable}. The idea is to construct an appropriate Lyapunov function $V$ that satisfies conditions (i)--(v) of Theorem \ref{thm-as-exp-stable}.  To preserve the flow of presentation, we arrange the proof to the Appendix \ref{sect-appendix}.

Next we present a sufficient condition for a.s. exponential stability for  the equilibrium point of a linear stochastic differential equation {\em without switching}. 
    \begin{coro}\label{coro1-as.exp-stab}
Let $i\in \M$. Suppose there exist  a positive definite matrix $G_{i}\in \S^{n\times n}$ and a positive number $p\in (0, 2]$ such that \begin{equation}
\label{eq-SDEi-exp-stable}
  \tilde{c}_{2}(i) -0.5c_3(i) - c_{4}(i)  < 0,
\end{equation} where $c_{3}(i), c_{4}(i)$ are defined in \eqref{eq-c2-5}, and
\begin{align*}
\tilde c_{2}(i)&: = p \mu_{i}+\frac{p-2}8{\Lambda}^2(G_{i}B_iG_{i}^{-1}+G_{i}^{-1}B_i'G_{i})   + \eta_{i},
   \end{align*}
where $\mu_{i}, \eta_{i}$, and  ${\Lambda}(G_{i}B_iG_{i}^{-1}+G_{i}^{-1}B_i'G_{i})$ are similarly  defined in \eqref{eq-mu-i-defn},   \eqref{eq-rho-i-defn} and \eqref{eq-Lambda-defn} respectively,
then the equilibrium point of the stochastic differential equation
\begin{equation}
\label{eq-SDEi}
\begin{aligned}
\d X^{(i)}(t)  =  A(i) X^{(i)}(t)  \d t +    B(i) X^{(i)}(t)  \d W( t)
   + \int_{\R^{n}_{0}} C(i,z) X^{(i)}(t-)  \tilde N(\d t, \d z),
\end{aligned}\end{equation}
is a.s. exponentially stable.

In addition, if $G_{i}= G$ and \eqref{eq-SDEi-exp-stable} holds for every $i\in \M$, then the equilibrium point of \eqref{linearsystem} is a.s. exponentially stable.
\end{coro}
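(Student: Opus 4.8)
The plan is to derive both assertions from Proposition \ref{prop-exp-stab-creterion-b}, exploiting the fact that $\tilde{c}_{2}(i), c_{3}(i), c_{4}(i)$ are precisely the ``per-regime'' pieces of the constants appearing there, while the remaining switching corrections either vanish (no-switching case) or cancel after averaging against the invariant measure $\pi$. First I would treat the single equation \eqref{eq-SDEi}. This is the single-regime specialization of \eqref{linearsystem} (formally $m=1$ and $Q=0$), which is trivially an irreducible chain with invariant distribution $\pi=(1)$; equivalently one applies Theorem \ref{thm-as-exp-stable} directly with $V(x)=(x'G_{i}x)^{p/2}$, for which condition (v) holds with $c_{5}=0$ while conditions (i)--(iv) produce the constants $c_{1}, \tilde{c}_{2}(i), c_{3}(i), c_{4}(i)$. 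Since there is no switching, the terms of $c_{2}(i)$ carrying $\mu_{i}+\beta$ and $\sum_{j}q_{ij}h_{j}$ drop out (indeed $Q=0$ forces $\mu_{i}+\beta=0$ in \eqref{eq-zeta-defn}), so $c_{2}(i)$ collapses to $\tilde{c}_{2}(i)$ and $c_{5}(i)=0$. Then \eqref{eq1-X-as-exp-stab} reads $\limsup_{t\to\infty}\frac1t\log|X^{(i)}(t)|\le \frac1p(\tilde{c}_{2}(i)-0.5c_{3}(i)-c_{4}(i))$, which is negative by \eqref{eq-SDEi-exp-stable}.

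For the second assertion, with the common matrix $G=G_{i}$, I would verify the criterion \eqref{eq-exp-stab-creterion} of Proposition \ref{prop-exp-stab-creterion-b}. Fix a solution $\zeta$ of \eqref{eq-zeta-defn} and choose the positive numbers $h_{i}$ large enough that $w_{i}:=h_{i}-p\zeta_{i}>0$ for every $i\in\M$ (possible since $\M$ is finite). The key computation is the algebraic identity
\[
c_{2}(i)-c_{5}(i)=\tilde{c}_{2}(i)+\sum_{j\in\M}q_{ij}\log(h_{j}-p\zeta_{j}),
\]
which follows by substituting the definitions of $c_{2}(i)$ and $c_{5}(i)$, writing $w_{j}=h_{j}-p\zeta_{j}$, and using $\sum_{j}q_{ij}\zeta_{j}=\mu_{i}+\beta$ from \eqref{eq-com-0} to cancel both the $\mu_{i}+\beta$ contribution and the $\sum_{j}q_{ij}h_{j}$ contribution.

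Averaging this identity against $\pi$ and using $\sum_{j}(\sum_{i}\pi_{i}q_{ij})\log w_{j}=\sum_{j}(\pi Q)_{j}\log w_{j}=0$ (by invariance of $\pi$), I obtain
\[
\sum_{i\in\M}\pi_{i}[c_{2}(i)-0.5c_{3}(i)-c_{4}(i)-c_{5}(i)]=\sum_{i\in\M}\pi_{i}[\tilde{c}_{2}(i)-0.5c_{3}(i)-c_{4}(i)].
\]
Since each summand on the right is negative by \eqref{eq-SDEi-exp-stable} and $\pi_{i}>0$ for all $i$ (irreducibility), the left side is negative, so \eqref{eq-exp-stab-creterion} holds and Proposition \ref{prop-exp-stab-creterion-b} yields a.s. exponential stability of \eqref{linearsystem}. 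As noted in Remark \ref{rem-about-zeta-choice}, the conclusion does not depend on the particular solution $\zeta$ chosen, consistent with the strengthened criterion \eqref{eq1-exp-stab-creterion}.

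I expect the main obstacle to be the identity $c_{2}(i)-c_{5}(i)=\tilde{c}_{2}(i)+\sum_{j}q_{ij}\log w_{j}$: it is exactly the bookkeeping showing that all the $\zeta$- and $h$-dependent switching terms telescope into the single quantity $\sum_{j}q_{ij}\log w_{j}$, whose $\pi$-average vanishes precisely because $\pi Q=0$. Once this cancellation is in place, everything else is a direct invocation of the already-established Proposition \ref{prop-exp-stab-creterion-b}, and the restriction $p\in(0,2]$ is carried through only to fix the branch $\Lambda=\widehat{\lambda}$ in \eqref{eq-Lambda-defn}.
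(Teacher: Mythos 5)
Your proposal is correct and follows the same route as the paper, whose proof consists of the single line that the corollary ``follows from Proposition \ref{prop-exp-stab-creterion-b} directly.'' You have simply supplied the bookkeeping the paper leaves implicit --- the specialization to $m=1$ (where $\mu_i+\beta=0$ and $c_5=0$) and the cancellation identity $c_2(i)-c_5(i)=\tilde c_2(i)+\sum_j q_{ij}\log(h_j-p\zeta_j)$ whose $\pi$-average vanishes because $\pi Q=0$ --- and both computations check out.
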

\begin{proof}
This follows from Proposition \ref{prop-exp-stab-creterion-b} directly. 
\end{proof}





\subsection{$p$th Moment Exponential Stability}\label{subsect:pth-moment-stab-linear}
\subsubsection{One-Dimensional System}  As in Section \ref{subsect:as-stab-linear}, let us first derive a necessary and sufficient condition for the $p$th moment exponential stability for the one-dimensional linear system  \eqref{eq-SDE-1}. To this end, we need to introduce some notations.
Let $\mathcal P$ be the set of probability measures on the state space $\M$; then under the irreducibility and ergodicity assumptions,  the empirical measure of the  continuous-time Markov chain $\al(\cdot)$ satisfies the large deviation principle with  the rate function
\begin{equation}
\label{eq-rate-fn-MC}
\mathbf I(\mu) : = -\inf_{u_{1}, \dots, u_{m} > 0} \sum_{i,j\in \M} \frac{\mu_{i} q_{ij} u_{j}}{u_{i}},
\end{equation} where $\mu=(\mu_{1}, \dots, \mu_{m})\in \mathcal P$; we refer to \cite{DonsV-75} for details.  It is known that  $\mathbf I(\mu)$ is lower semicontinuous and  $\mathbf I(\mu)=0$ if and only if  $\mu= \pi$.
In addition, by virtue of \cite{Zong-14}, if $a=(a_{1}, \dots, a_{m})' \in \R^{m}$, then we have
\begin{equation}
\label{eq-Lambda(a)}
\Upsilon(a):= \lim_{t\to \infty} \frac{1}{t} \log\bigg(\E\bigg[\exp\bigg\{\int_{0}^{t} a(\al(s)) \d s \bigg\}\bigg]\bigg) = \sup_{\mu \in \mathcal P} \set{\sum_{i\in \M} a_{i} \mu_{i} -\mathbf I(\mu)}.
\end{equation}
Note that $\sum_{i \in \M}a_{i}\pi_{i} \le \Upsilon(a) \le \max_{i\in \M} a_{i}.$

\begin{prop}\label{prop-moment-exp-stab-1d}
Assume the conditions of Proposition \ref{prop-as-exp-stable-1d}. In addition, assume that there exists some $p>0$ such that for each $i\in \M$, $\int_{\R_{0}}  \abs{|1+ c(i,z)|^{p} - p c(i,z) -1}\nu(\d z) < \infty.$ Denote  $f=(f(1), \dots, f(m))$ with
\begin{displaymath}
 f(i)  = f_{p}(i) = pa(i)+ \frac{1}{2}p(p-1)b^2(i)    + \int_{\R_{0}}  [|1+ c(i,z)|^{p} - p c(i,z) -1]\nu(\d z).
\end{displaymath}  Then we have  \begin{equation}
\label{eq-Lypunov exponent}
\lim_{t\to \infty} \frac{1}{t} \log(\E[|x(t)|^{p}]) = \Upsilon(f),
\end{equation}  where $\Upsilon(f)$ is similarly defined as in \eqref{eq-Lambda(a)}. Therefore, the trivial solution of  \eqref{eq-SDE-1} is $p$th moment exponentially stable if and only if $\Upsilon(f) < 0$.
\end{prop}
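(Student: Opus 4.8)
The plan is to exploit the one-dimensional, linear structure to obtain an \emph{exact} expression for $\E[|x(t)|^{p}]$ as a Feynman--Kac functional of the chain $\alpha(\cdot)$, and then read off the exponential rate from \eqref{eq-Lambda(a)}. First, as in Proposition \ref{prop-as-exp-stable-1d}, Assumption \ref{assump-jump} holds (by the second part of \eqref{eq0-Ci(z)-cond}), so Lemma \ref{lem-0-inaccessible} gives $x(t)\neq 0$ for all $t\ge 0$ a.s.; hence $V(x):=|x|^{p}$ is twice continuously differentiable along the trajectory. A direct computation of the generator on $V$ — using $DV(x)=p|x|^{p-2}x$, $D^{2}V(x)=p(p-1)|x|^{p-2}$, and observing that the switching term $\sum_{j\in\M}q_{ij}(x)[V(x,j)-V(x,i)]$ vanishes because $V$ does not depend on $i$ — yields
\[
\op V(x,i)=\Bigl[pa(i)+\tfrac12 p(p-1)b^{2}(i)+\int_{\R_{0}}\bigl(|1+c(i,z)|^{p}-1-pc(i,z)\bigr)\nu(\d z)\Bigr]|x|^{p}=f(i)\,V(x),
\]
where the integral is finite by the standing hypothesis $\int_{\R_{0}}\bigl||1+c(i,z)|^{p}-pc(i,z)-1\bigr|\nu(\d z)<\infty$.

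Next I would condition on the entire path of $\alpha(\cdot)$. Since $W$, the Poisson measure $N$ driving the jumps of $x$, and the point process $\mathfrak{p}$ driving $\alpha$ are independent, given $\alpha(\cdot)$ the process $x$ solves a linear SDE with deterministic time-dependent coefficients. Applying the generalized It\^o formula to $V$ gives $|x(t)|^{p}=|x|^{p}+\int_{0}^{t}f(\alpha(s))|x(s)|^{p}\,\d s+M(t)$, where $M$ collects the Brownian integral $\int_{0}^{t}p\,b(\alpha(s))|x(s)|^{p}\d W(s)$ and the compensated jump integral $\int_{0}^{t}\!\int_{\R_{0}}\bigl(|1+c(\alpha(s-),z)|^{p}-1\bigr)|x(s-)|^{p}\tilde N(\d s,\d z)$; note there is no switching martingale term since $V$ is independent of $i$. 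Stopping at $\tau_{n}:=\inf\{t\ge0:|x(t)|\ge n\}$ (with $\tau_{n}\to\infty$ a.s. by non-explosion) makes $M(\cdot\wedge\tau_{n})$ a genuine martingale, the jump integrand being integrable because $\int_{\R_{0}}\bigl||1+c(i,z)|^{p}-1\bigr|\nu(\d z)\le\int_{\R_{0}}\bigl||1+c(i,z)|^{p}-pc(i,z)-1\bigr|\nu(\d z)+p\int_{\R_{0}}|c(i,z)|\nu(\d z)<\infty$ by \eqref{eq0-Ci(z)-cond}. Taking conditional expectation given $\alpha(\cdot)$ and letting $n\to\infty$ (using the finiteness of $\E[|x(t)|^{p}]$ from Lemma~3.1 of \cite{ZhuYB-15} to pass to the limit) produces the linear integral equation $\E[|x(t)|^{p}\mid\alpha]=|x|^{p}+\int_{0}^{t}f(\alpha(s))\E[|x(s)|^{p}\mid\alpha]\,\d s$, whose unique solution is
\[
\E\bigl[|x(t)|^{p}\,\big|\,\alpha\bigr]=|x|^{p}\exp\Bigl\{\int_{0}^{t}f(\alpha(s))\,\d s\Bigr\}.
\]

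Taking expectations over the chain then gives $\E[|x(t)|^{p}]=|x|^{p}\,\E\bigl[\exp\{\int_{0}^{t}f(\alpha(s))\d s\}\bigr]$, so that $\tfrac1t\log\E[|x(t)|^{p}]=\tfrac{p}{t}\log|x|+\tfrac1t\log\E\bigl[\exp\{\int_{0}^{t}f(\alpha(s))\d s\}\bigr]$. Letting $t\to\infty$, the first term vanishes while the second converges to $\Upsilon(f)$ by the definition \eqref{eq-Lambda(a)}, which establishes \eqref{eq-Lypunov exponent}; the stability dichotomy is then immediate from the definition of $p$th moment exponential stability. I expect the main obstacle to be the rigorous justification that the stopped martingale terms have vanishing conditional expectation in the limit — that is, interchanging $\lim_{n\to\infty}$ with the conditional expectation — which is precisely where the integrability hypotheses on $c$ together with the non-explosion and non-degeneracy results (Lemma \ref{lem-0-inaccessible} and the moment bound) are essential.
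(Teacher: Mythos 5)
Your argument is correct and lands on the same pivotal identity as the paper, namely $\E[|x(t)|^{p}]=|x|^{p}\,\E\bigl[\exp\{\int_{0}^{t}f(\al(s))\,\d s\}\bigr]$, after which the conclusion is immediate from \eqref{eq-Lambda(a)}; but the mechanism you use to reach that identity is genuinely different. The paper applies It\^o's formula to $\log|x(t)|$-type quantities to write the \emph{exact pathwise} solution $|x(t)|^{p}=|x|^{p}\exp\{\int_{0}^{t}f(\al(s))\d s\}\,\EE(t)$ with $\EE$ a Dol\'eans-type exponential, and then removes $\EE(t)$ by decomposing over the events $\{\tau_{k}\le t<\tau_{k+1}\}$ and conditioning iteratively on $\D_{\tau_{k}}=\G\vee\H_{\tau_{k}}$, invoking Corollary 5.2.2 of \cite{APPLEBAUM} to get $\E[\exp\{g(t)-g(\tau_{k})\}\mid\D_{\tau_{k}}]=1$ on each inter-switching interval. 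You instead compute $\op V=fV$ for $V(x)=|x|^{p}$, condition on the entire path $\sigma(\al(\cdot))$ (legitimate here because $Q$ is constant, so $\al$ is independent of $W$ and $N$ --- a point worth stating explicitly), and solve the resulting linear integral equation for the conditional moment. Your route is more elementary in that it avoids the explicit exponential-martingale representation, but it transfers the entire burden onto the step you flag at the end: removing the localization at $\tau_{n}$ requires uniform integrability of $\{|x(t\wedge\tau_{n})|^{p}\}_{n}$, e.g.\ via $\E[\sup_{s\le t}|x(s)|^{p}]<\infty$. For $p\in(0,2]$ this is covered by Lemma 3.1 of \cite{ZhuYB-15}, but for $p>2$ it needs a separate Kunita-type estimate using your integrability hypothesis on $|1+c|^{p}$ (alternatively, a Gronwall bound on the stopped conditional moments plus monotone convergence along an increasing truncation). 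The paper's iterated-conditioning argument sidesteps this interchange entirely, which is the main thing its approach buys; yours buys a shorter and more transparent derivation of the generator identity $\op V=fV$ and of why the switching term drops out.
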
 \begin{proof}
See Appendix \ref{sect-appendix}.
\end{proof}

\subsubsection{Multidimensional System} Now let's focus on establishing a sufficient condition for the $p$-th moment exponential stability of the trivial solution of the multidimensional system \eqref{linearsystem}.
In view of Theorem \ref{thm-moment-stab} and the calculations in Proposition \ref{prop-exp-stab-creterion-b}, we have the following proposition:
\begin{prop}\label{cor-p-stab}
If there exist a positive definite matrix $G\in \S^{n\times n}$, positive numbers $p$ and $h_{i}, i \in \M$ such that
\begin{equation}
\label{eq-pth-moment-stab-condition}
\delta: = \min\biggl\{ \max_{i\in \M} c(i; h, \zeta)\Big| \zeta \in \R^{m}, Q\zeta = \mu + \beta \one, h_{i} - p \zeta_{i} > 0 \text{ for each } i \in \M \biggr \} < 0,
\end{equation}  then  the equilibrium point of \eqref{linearsystem} is  exponentially stable in the $p$th moment with Lyapunov exponent less than or equal to $\delta$, 
where $\mu \in \R^{m}$ and $\beta \in \R$ are defined  in \eqref{eq-mu-i-defn} and \eqref{eq-beta-defn}, respectively, and
\begin{displaymath}
 c(i; h, \zeta) : =   p\mu_i +\frac{p-2}8{\Lambda}^2(GB_iG^{-1}+G^{-1}B_i'G)- \frac{p}{h_i-p\zeta_{i}} (\mu_{i}+\beta)+ \frac{1}{h_i-p\zeta_i}\sum\limits_{j\in \M} q_{ij} h_{j}+ \eta_{i},
  \end{displaymath}
in which $\eta_{i}$ and  ${\Lambda}(GB_iG^{-1}+G^{-1}B_i'G)$    are defined in   \eqref{eq-rho-i-defn} and \eqref{eq-Lambda-defn}, respectively.

\end{prop}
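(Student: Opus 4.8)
The plan is to reduce the assertion to an application of Theorem \ref{thm-moment-stab}, part (a), by recycling the Lyapunov function and the generator estimate already obtained in the proof of Proposition \ref{prop-exp-stab-creterion-b}. Note first that \eqref{eq0-Ci(z)-cond} guarantees that Assumptions \ref{assump-0} and \ref{assump-ito} hold for \eqref{linearsystem}, so Theorem \ref{thm-moment-stab} is applicable. I would fix the positive definite matrix $G$ and the positive numbers $p$ and $h_i$ given in the hypothesis, and then choose a solution $\zeta\in\R^m$ of the equation $Q\zeta = \mu + \beta\one$ in \eqref{eq-zeta-defn} that attains the minimum in \eqref{eq-pth-moment-stab-condition}, so that $h_i - p\zeta_i > 0$ for every $i\in\M$ and $\max_{i\in\M} c(i;h,\zeta) = \delta < 0$. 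With this $\zeta$ in hand, I would take as Lyapunov function the same $V(x,i) := (h_i - p\zeta_i)(x'Gx)^{p/2}$ used in the proof of Proposition \ref{prop-exp-stab-creterion-b}.

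The next step is to verify the two hypotheses of Theorem \ref{thm-moment-stab} for this $V$. Condition (i) is immediate: since $G$ is positive definite one has $\lambda_{\min}(G)|x|^2 \le x'Gx \le \lambda_{\max}(G)|x|^2$, and since $h_i - p\zeta_i > 0$ for all $i$ the two-sided bound $c_1|x|^p \le V(x,i) \le c_2|x|^p$ holds with $c_1 := \min_{i\in\M}(h_i - p\zeta_i)(\lambda_{\min}(G))^{p/2} > 0$ and $c_2 := \max_{i\in\M}(h_i - p\zeta_i)(\lambda_{\max}(G))^{p/2}$. Condition (ii) carries the substance, but it is exactly the generator computation performed in Proposition \ref{prop-exp-stab-creterion-b}: combining Assumption \ref{assump-m-matrix} with the identities $\sum_{j\in\M} q_{ij}\zeta_j = \mu_i + \beta$ (from \eqref{eq-zeta-defn}) and $\sum_{j\in\M} q_{ij} = 0$, the drift, diffusion, switching, and jump contributions to $\op V$ assemble into the bound $\op V(x,i) \le c(i;h,\zeta)\,V(x,i)$ for all $x\neq 0$ and $i\in\M$. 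Since $c(i;h,\zeta) \le \max_{j\in\M} c(j;h,\zeta) = \delta$ for each $i$, this yields $\op V(x,i) \le \delta V(x,i) = -c_3 V(x,i)$ with $c_3 := -\delta > 0$, which is condition (ii).

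With both conditions checked, Theorem \ref{thm-moment-stab}(a) applies verbatim and gives $\E[|X(t)|^p] \le \tfrac{c_2}{c_1}|x|^p e^{-c_3 t}$, so the equilibrium point of \eqref{linearsystem} is exponentially stable in the $p$th moment with Lyapunov exponent at most $-c_3 = \delta$, as claimed. The only genuinely new bookkeeping relative to Proposition \ref{prop-exp-stab-creterion-b} is conceptual rather than computational: $p$th moment stability needs merely the uniform negativity $\max_{i\in\M} c(i;h,\zeta) < 0$ of the drift constant appearing in condition (ii) of Theorem \ref{thm-as-exp-stable}, whereas almost sure stability used the $\pi$-weighted average of the full collection $c_2 - 0.5 c_3 - c_4 - c_5$; consequently none of the conditions (iii)--(v) of Theorem \ref{thm-as-exp-stable} enter here. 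I expect the main point requiring care to be the selection of $\zeta$: because the solutions of \eqref{eq-zeta-defn} form the affine line $\zeta^0 + r\one$ (see Remark \ref{rem-about-zeta-choice}) and $c(i;h,\zeta)$ depends on $\zeta$ only through the terms $1/(h_i - p\zeta_i)$, I would confirm that a minimizer over the admissible range of $r$ actually exists, so that the sharp exponent $\delta$ is attained and not merely approached.
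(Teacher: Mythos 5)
Your proposal is correct and follows essentially the same route as the paper: the same Lyapunov function $V(x,i)=(h_i-p\zeta_i)(x'G^2x)^{p/2}$ (note the paper uses $x'G^2x$, not $x'Gx$, consistently with how $\mu_i$ and $\eta_i$ are defined), the same reuse of the generator estimate $\op V(x,i)\le c(i;h,\zeta)V(x,i)$ from the proof of Proposition \ref{prop-exp-stab-creterion-b}, and the same appeal to Theorem \ref{thm-moment-stab}(a). The only slips are cosmetic: the bound on $\op V$ comes from the direct eigenvalue computation in that earlier proof rather than from Assumption \ref{assump-m-matrix}, and the attainment of the minimum over $\zeta$ is not needed for the stability conclusion itself.
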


\begin{proof} Let $p, h= (h_{1}, \dots, h_{m})' $ and $G$ be as in the statement of the corollary and
 consider the function $V(x,i)  = (h_{i} - p \zeta_{i}) (x' G^{2} x)^{p/2} $, $(x,i) \in \R^{n }\times \M$. Then we have
 \begin{displaymath}
0 < \min_{i\in \M} (h_{i} - p \zeta_{i})( \lambda_{\min}(G^{2}))^{p/2}  |x|^{p}\le V(x,i)  \le \max_{i\in \M} (h_{i} - p \zeta_{i})( \lambda_{\max}(G^{2}))^{p/2} |x|^{p}.
\end{displaymath}
Moreover, the detailed calculations in the proof of Proposition \ref{prop-exp-stab-creterion-b} reveal that  \begin{displaymath}
\op V(x,i) \le c(i; h, \zeta) V(x,i)  \le \max_{i \in \M}c(i; h, \zeta)  V(x,i).
\end{displaymath} Then condition \eqref{eq-pth-moment-stab-condition} and Theorem  \ref{thm-moment-stab} lead to the conclusion.
\end{proof}

Finally we apply Theorem \ref{thm-moment-stab-m-matrix} to derive a sufficient condition for a.s. and moment exponential stability for the  equilibrium point of \eqref{linearsystem}. Note that in Proposition \ref{prop-as-moment-stab-M-matrix} below, the infinitesimal generator $Q=(q_{ij})$ of the continuous-time Markov chain $\al$ need not to be irreducible and ergodic.  
\begin{prop}\label{prop-as-moment-stab-M-matrix}
Suppose that there exist a positive constant $p$ and a positive definite matrix $G\in \S^{n\times n}$ such that the $m\times m$ matrix $\mathcal A:=\mathrm{diag}(\theta_{1},\dots \theta_{m}) -Q$ is a nonsingular $M$-matrix, then the equilibrium point of \eqref{linearsystem} is    $p$th moment exponentially stable, where for each $i\in \M$, 
\begin{align*}
  \theta_{i}  &: = -\biggl[\frac{p}{2}\lambda_{\max}(GA_{i}G^{-1} +G^{-1}A_{i}G + G^{-1}B_{i}' G^{2} B_{i}G^{-1}) \\
    & \qquad\qquad + \frac{p(p-2)}{4} \rho^{2}(GB_{i}G^{-1} +G^{-1}B_{i}G)+ \eta_{i} \biggr ],     
\end{align*} and $ \eta_{i}$ is defined in \eqref{eq-rho-i-defn}. If in addition that either $p \in (0, 2]$ or else $p > 2$ with $$\max_{i\in\M}\int_{\R_{0}^{n}}|C_{i}(z)|^{p}\nu(\d z) < \infty,$$  then the equilibrium point of \eqref{linearsystem} is also a.s. exponentially stable. 
\end{prop}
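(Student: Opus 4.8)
The plan is to read Proposition \ref{prop-as-moment-stab-M-matrix} off as the specialization of Theorem \ref{thm-moment-stab-m-matrix} to the linear coefficients $b(x,i)=A_{i}x$, $\sigma(x,i)=B_{i}x$, and $\gamma(x,i,z)=C_{i}(z)x$ of \eqref{linearsystem}, taking the positive definite matrix in Assumption \ref{assump-m-matrix} to be $G^{2}$ (so that $\lan x, G^{2}x\ran=\abs{Gx}^{2}$). Since \eqref{eq0-Ci(z)-cond} already guarantees that Assumptions \ref{assump-0}, \ref{assump-ito}, and \ref{assump-jump} hold for \eqref{linearsystem}, the only task is to verify Assumption \ref{assump-m-matrix} with explicit constants and then check that the matrix $\A(p,G^{2})$ produced by \eqref{eq-matrix-A(pG)} coincides with the matrix $\mathcal A$ in the statement; both conclusions will then follow directly from Theorem \ref{thm-moment-stab-m-matrix}.

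First I would substitute $y=Gx$ and recast the three inequalities of Assumption \ref{assump-m-matrix} as Rayleigh-quotient estimates. For \eqref{eq1-moment-stable-M}, the quantity $\lan G^{2}x, A_{i}x\ran+\tfrac12\lan B_{i}x, G^{2}B_{i}x\ran$ becomes $\tfrac12\, y'(GA_{i}G^{-1}+G^{-1}A_{i}'G+G^{-1}B_{i}'G^{2}B_{i}G^{-1})y$, whose symmetric coefficient matrix has maximal eigenvalue $2\mu_{i}$ by \eqref{eq-mu-i-defn}; hence one may take $\varrho_{i}=\mu_{i}$. For \eqref{eq2-moment-stable-M}, $(\lan x, G^{2}B_{i}x\ran)^{2}$ equals $\bigl(\tfrac12 y'(GB_{i}G^{-1}+G^{-1}B_{i}'G)y\bigr)^{2}$, whose sharp upper bound (needed when $p\ge2$) is governed by the spectral radius and whose sharp lower bound (needed when $0<p<2$) by $\widehat{\lambda}$; this is exactly the dichotomy encoded by $\Lambda$ in \eqref{eq-Lambda-defn}, so one takes $\delta_{i}=\tfrac14\Lambda^{2}(GB_{i}G^{-1}+G^{-1}B_{i}'G)$. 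For the jump condition \eqref{eq3-moment-stable-M}, I would bound $\lan x+C_{i}(z)x, G^{2}(x+C_{i}(z)x)\ran/\lan x, G^{2}x\ran=\abs{G(I+C_{i}(z))G^{-1}y}^{2}/\abs{y}^{2}$ above by $\lambda_{\max}(G^{-1}(I+C_{i}(z))'G^{2}(I+C_{i}(z))G^{-1})$ and the correction term $p\lan C_{i}(z)x, G^{2}x\ran/\lan x, G^{2}x\ran$ below by $\tfrac{p}{2}\lambda_{\min}(GC_{i}(z)G^{-1}+G^{-1}C_{i}'(z)G)$; integrating against $\nu$ then produces precisely $\lambda_{i}=\eta_{i}$ of \eqref{eq-rho-i-defn}.

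With $\varrho_{i}=\mu_{i}$, $\delta_{i}=\tfrac14\Lambda^{2}(\cdot)$, and $\lambda_{i}=\eta_{i}$, the diagonal entries $\theta_{i}=-[p\varrho_{i}+p(p-2)\delta_{i}+\lambda_{i}]$ of $\A(p,G^{2})$ reduce to the $\theta_{i}$ of the statement (with $\Lambda^{2}$ in place of $\rho^{2}$, the two agreeing when $p>2$), so that $\A(p,G^{2})$ is the matrix $\mathcal A$. Because $\mathcal A$ is assumed to be a nonsingular $M$-matrix, Theorem \ref{thm-moment-stab-m-matrix} yields $p$th moment exponential stability at once. For the almost sure assertion I would invoke the second half of the same theorem: when $p\in(0,2]$ it holds automatically, while for $p>2$ it suffices to verify \eqref{eq-pth-moment-as-stable}, which for the linear jump term reads $\int_{\R^{n}_{0}}\abs{C_{i}(z)x}^{p}\nu(\d z)\le\hat\kappa\,\abs{x}^{p}$; this follows from the hypothesis $\max_{i\in\M}\int_{\R^{n}_{0}}\abs{C_{i}(z)}^{p}\nu(\d z)<\infty$ together with $\abs{C_{i}(z)x}\le\norm{C_{i}(z)}\,\abs{x}\le\abs{C_{i}(z)}\,\abs{x}$.

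The step I expect to require the most care is matching the jump condition \eqref{eq3-moment-stable-M} to $\eta_{i}$, since one must simultaneously dominate the genuinely nonlinear $(\,\cdot\,)^{p/2}$ term by a maximal-eigenvalue bound and the linear correction by a minimal-eigenvalue bound, while confirming that the $\nu$-integrals are well defined under \eqref{eq0-Ci(z)-cond}. A secondary subtlety is the $p\ge2$ versus $0<p<2$ dichotomy in the diffusion term, where the inequality in \eqref{eq2-moment-stable-M} reverses and one must pass between $\rho$ and $\widehat{\lambda}$; apart from this, the computation is the routine change-of-variables bookkeeping already carried out in the proof of Proposition \ref{prop-exp-stab-creterion-b}.
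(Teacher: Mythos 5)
Your proposal is correct and follows essentially the same route as the paper: verify Assumption \ref{assump-m-matrix} for the matrix $G^{2}$ with $\varrho_{i}=\mu_{i}$, $\delta_{i}=\tfrac14\rho^{2}(GB_{i}G^{-1}+G^{-1}B_{i}'G)$, and $\lambda_{i}=\eta_{i}$, reusing the Rayleigh-quotient bounds from the proof of Proposition \ref{prop-exp-stab-creterion-b}, and then invoke Theorem \ref{thm-moment-stab-m-matrix} for both the moment and the almost sure assertions. Your extra remark about replacing $\rho^{2}$ by $\Lambda^{2}$ (i.e., $\widehat\lambda^{2}$) when $0<p<2$, where the inequality in \eqref{eq2-moment-stable-M} reverses, is in fact slightly more careful than the paper's own verification, which only records the upper bound by the spectral radius.
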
 The proof of Proposition \ref{prop-as-moment-stab-M-matrix} consists of  straightforward verifications of \eqref{eq1-moment-stable-M}--\eqref{eq3-moment-stable-M} of Assumption \ref{assump-m-matrix}. Theorem  \ref{thm-moment-stab-m-matrix}  then leads to the assertions on almost sure and moment stability. Again we shall arrange the proof to the appendix \ref{sect-appendix}.

\subsection{Examples}\label{sect:examples}

\begin{example}
In this example, we consider  the one-dimension linear system given in  \eqref{eq-SDE-1}, in which  $\alpha \in \M = \{ 1,2,3\}$ is a continuous-time Markov chain with generator
$Q= \begin{bmatrix}
        -3 & 1 & 2\\
       2 & -2 & 0\\
       4 & 0 & -4 \\
      \end{bmatrix},$    $a_{1}=4,a_{2} = 2,a_{3}=3$, $b_{1}=1,b_{2}=3, b_{3}=1 $, and  $c_{i}(z)=1\wedge z^2 $ for $i=1,2,3$.
 In addition, suppose that the characteristic measure of the Poisson random measure $N$ is given by the  L\'evy measure  $\nu (\d z) =  \frac{\d z}{ z^{4/3}}$, $z\in \R_{0}$. Note that      $\nu$ is an infinite L\'evy measure, i.e., $\nu(\R_{0}) =\infty$. 

By direct computations, we get
\[
 \delta=\sum_{i=1}^3 \pi_i\biggl[a_i-\frac12b^2_i+\int_{\R_0}\big(\log|1+c_i(z)|-c_i(z)\big)\nu(\d z)\biggr]=  -0.2867.
\]
Then    Proposition  
\ref{prop-as-exp-stable-1d} implies that  the trivial solution of  \eqref{eq-SDE-1} is almost surely exponentially stable.

However, if the  jumps are excluded from the system \eqref{eq-SDE-1}, that is, if  $c_i(z)=0$ for $i =1,2,3$, then
\[\sum_{i=1}^3 \pi_i \biggl(a_i-\frac12b_i^2\biggr)=1.75,\]
which implies that the trivial solution of \eqref{eq-SDE-1} 
is   almost surely exponentially  unstable.
 This example indicates that the jumps can contribute to the stability of the equilibrium point.
\end{example}

\begin{example}\label{example1} Consider the linear system
   \begin{equation}
\label{eq-ex-linearsystem}
\begin{aligned}
\d X(t)  =  A(\al(t)) X(t)  \d t +    B(\al(t)) X(t)  \d W( t)
   + \int_{\R_{0}} C(\al(t-),z) X(t-)  \tilde N(\d t, \d z),
\end{aligned}\end{equation} in which $\alpha \in \M = \{ 1,2,3\}$ is a continuous-time Markov chain with generator $Q=      \begin{bmatrix}
        -3 & 1 & 2\\
       2 & -2 & 0\\
       4 & 0 & -4 \\
      \end{bmatrix},$ $N$ is a Poisson random measure on $[0,\infty)\times \R_{0}$ whose corresponding L\'evy measure is given by
      $\nu(\d z)=\frac12(e^{-z}\wedge e^z)\d z, z \in \R$,    and
 \begin{align*}
& A_1=\left[
      \begin{array}{ccc}
        10 & 1 & 8 \\
        -3 & 10 & 2 \\
        -1 & -8 & 12 \\
      \end{array}
    \right],\;\;
& A_2=\left[
      \begin{array}{ccc}
        17 &  5 &  8\\
       -1 &  11 &  -3\\
        4 &  -5 &  13\\
      \end{array}
    \right],\ \quad
& A_3=\left[
      \begin{array}{ccc}
       10 &  -4 &  12 \\
       8 &  10 &  -8\\
       3 &  -9 &  11\\
      \end{array}
    \right],\;\;\\
& B_1=  \left[
     \begin{array}{ccc}
         1 & 2 & 0  \\
        -2 & 1 & 4 \\
        -1 & -2 & 1 \\
     \end{array}
   \right],
& B_2=\left[
     \begin{array}{ccc}
       -1 &  2 & 1 \\
       -3 & 1 & 1 \\
       2 & -1 & 1 \\
     \end{array}
   \right],\ \quad
& B_3=\left[
     \begin{array}{ccc}
        1  & 2  & 2\\
       -1  & 1  & 4 \\
       -3  & -2  & 1 \\
     \end{array}
   \right],\\  &  C_{i}(z) = 0 \in \R^{3\times 3}. \end{align*}   
   Here $\nu$ belongs to the class of double exponential distributions; we refer to \cite{Kou-02} for applications of such distributions in math finance.   

 Let us take      \[ \aligned
G=& \left[
      \begin{array}{ccc}
      3 & 0 & 0\\
       0 & 2 & 0\\
       0 & 0 & 3 \\
      \end{array}
    \right],\;\;
h=[20, 20, 20],\;\;
p=0.1.
\endaligned
\]
Then by direct calculation, we get
\[
\aligned
\pi=&(0.5, 0.25, 0.25),\;\;\mu=(23.7194,   34.0899,   28.3542)',\ \
& \beta=  -27.4707,\;\;c_3 = c_4 = \mathbf{0},
\endaligned
\]
and \begin{equation}
\label{eq-ex1-calcuation}
  \min_{\zeta\in D}\sum_{i\in \M}\pi_i[c_2(i)-c_5(i)]=2.7422>0,
\end{equation} where $D:=\{\zeta=(\zeta_1, \zeta_2, \zeta_3)\in \R^3| \;Q\zeta=\mu+\beta\one\}$,
   and the 
   minimizer in \eqref{eq-ex1-calcuation} is given by $\zeta=(131.65,  112,  142.5264)'$, and
 \[\aligned
 &c_2(1)-c_5(1)=6.5169 ,\;\;c_2(2)-c_5(2)=9.8252,\;\;c_2(3)-c_5(3)= 2.7433.
 \endaligned
 \]
Thus we cannot apply Proposition \ref{prop-exp-stab-creterion-b} to determine the almost surely exponential stability of the trivial solution of \eqref{eq-ex-linearsystem}.

Next we observe that
\[\sum_{i\in \mathcal{M}}\pi_i\left[\lambda_{\min}(B_iB_i')+\frac12\lambda_{\min}(A_i+A_i')-\max\big\{\lambda_{\min}^2(B_iB_i'),\lambda_{\max}^2(B_iB_i')\big\}\right]=0.3541.\]
Therefore by virtue of   Theorem 4.3 in \cite{KZY07},   the trivial solution of \eqref{eq-ex-linearsystem} is unstable in probability, which, in turn, implies  that the trivial solution cannot be almost surely exponential stable.
\end{example}
\begin{example}\label{example2}
Again consider the linear system \eqref{eq-ex-linearsystem}  with the same $Q, A_{i}, B_{i}, G, h, $ and $p$ as given in Example \ref{example1}, but with
\[C_1=\left[
      \begin{array}{ccc}
        15 & 1 & 2\\
       -1 & 9  & 1\\
       7  & -1 & 10\\
      \end{array}
    \right], \;\;\\C_2=\left[
      \begin{array}{ccc}
        20  & 6  & -3\\
       1  & 14 &  2\\
       3  & 2  & 8\\
      \end{array}
    \right], \;\;\\C_3=\left[
      \begin{array}{ccc}
        7 &  1  & 4\\
       2  & 10  & 1\\
       -1  & 5  & 11\\
      \end{array}
    \right].
\]
Then we have
\[
\aligned
\pi=&(0.5, 0.25, 0.25),\;\;\mu=(23.7194,   34.0899,   28.3542)',\ \ &
\beta= -27.4707,\;\;c_3 = c_4 = \mathbf{0},
\endaligned
\] and
\begin{equation}
\label{eq-ex2-calculation}
\min_{\zeta\in D}\sum_{i\in \mathcal{M}}\pi_i[c_2(i)-c_5(i)]=-0.0939,
\end{equation}
where the 
minimizer of \eqref{eq-ex2-calculation} is $\zeta=(116.2209,  112.9113,  116)'$, and
\[
\aligned
&c_2(1)-c_5(1)=-0.3687 ,\;\;c_2(2)-c_5(2)=-0.1647 ,\;\;c_2(3)-c_5(3)= 0.3463.
\endaligned
\]
Therefore thanks to  Proposition \ref{prop-exp-stab-creterion-b}, the trivial solution of 
 \eqref{eq-ex-linearsystem}  is almost surely exponentially stable. A comparison between Examples \ref{example1} and \ref{example2} shows that in some cases, the jumps can suppress the growth of the solution. 
 In addition, we notice that   the  switching mechanism  also 
 contributes to the almost surely exponential stability. 
\end{example}

\section{Conclusions and Further Remarks}\label{sect-conclusion}

Motivated by the emerging applications of complex   stochastic  systems in areas such as finance and energy spot price modeling, this paper  is devoted to almost sure and $p$th moment exponential stabilities of regime-switching jump diffusions. The main results include  sufficient conditions for almost sure and $p$th moment exponential stabilities of the equilibrium point of  nonlinear and linear regime-switching jump diffusions. For general nonlinear systems,  the sufficient conditions for stability are expressed in terms of the existence of appropriate Lyapunov functions; from which we also derive  a  condition using $M$-matrices. In addition,  we  show  that $p$th moment stability implies almost sure exponential stability.
 For one-dimensional linear  regime-switching jump diffusions, we obtain necessary and sufficient conditions for almost sure and $p$th moment exponential stabilities. For the multidimensional system, we present  verifiable sufficient conditions in terms of the eigenvalues of certain matrices for stability. Several examples are provided to illustrate the results.

In this work, the switching component $\alpha$ has a finite state space. A relevant question is: Can we allow $\alpha $ to have an infinite countable space? In addition,  the jump part is driven  by  a Poisson random measure associated with a L\'evy process.
 A worthwhile future effort is to treat systems in which
the random driving force is an alpha-stable process that has finite
$p$th moment with $p<2$. This requires more work and careful consideration.

\section*{Acknowledgements}  We would like to thanks the anonymous reviewers for their useful comments and suggestions. 

The research of Zhen Chao was supported in part by the NSFC (No. 11471122), the NSF of Zhejiang Province
(No. LY15A010016) and ECNU reward for Excellent Doctoral Students in Academics (No. xrzz2014020). The research of Kai Wang and Yanling Zhu was supported in part by the NSF
of Anhui Province (No. 1708085MA17 and No. 1508085QA13), the Key NSF of Education Bureau of Anhui Province
(No. KJ2013A003) and the Support Plan of Excellent Youth Talents in Colleges and Universities in Anhui Province (2014).
The research of Chao Zhu was supported in part by the NSFC (No. 11671034) and the Simons Foundation   (award number 523736).
\appendix
\section{Several Technical Proofs}\label{sect-appendix}

\begin{proof}[Proof of Theorem \ref{thm-as-exp-stable}]  
Recall that thanks to Lemma \ref{lem-0-inaccessible},  
  for every $(x_{0},\al_{0}) \in \R_{0}^{n}\times \M$,
  $X(t) := X^{x_{0},\al_{0}}(t) \not=0$ for all $t\geq 0$ a.s. Let $U(x,i)=\log V(x,i)$ for $(x,i)\in  \R^{n}_{0} \times \M$. Since
\begin{displaymath}
DU(x,i)=\frac{DV(x,i)}{V(x,i)} \text{ and } D^2U(x,i)= \frac{D^2V(x,i)}{V(x,i)}-\frac{DV(x,i) DV(x,i)'}{V^2(x,i)},
  \end{displaymath}we have
\begin{align}
\nonumber  \op & U(x,i)   \\ &  \nonumber = \frac{\langle D V(x,i), b(x,i)\rangle}{V(x,i)} + \frac{1}{2 V(x,i) } \tr \bigl ( \sigma\sigma'(x,i) D^{2} V(x,i) \bigr) - \frac{|\langle DV(x,i),\sigma(x,i)\rangle|^2}{2V^2(x,i)} \\
 \nonumber   &   \   + \sum_{j\in \M} q_{ij}(x)  \log V(x,j) + \int_{\R_0^n}\biggl[\log\frac{V(x +\gamma(x,i,z),i)}{V(x, i)}-\frac{  DV(x,i) \cdot \gamma(x,i,z)}{V(x,i)}\biggr]\nu(\d z)\d s \\
\nonumber     & = \frac{ \op V(x,i)}{V(x,i)}  - \frac{|\langle DV(x,i),\sigma(x,i)\rangle|^2}{2V^2(x,i)} + \sum_{j\in \mathcal{M}}q_{ij}(x)\left(\log V(x,j)-\frac{V(x,j)}{V(x,i)}\right)  \\  \label{eq-LU-calculation}
     & \   + \int_{\R_0^n}\biggl[\log\frac{V(x +\gamma(x,i,z),i)}{V(x, i)} - \frac{V(x +\gamma(x,i,z),i)}{V(x,i)}+1\biggr] \nu(\d z).
\end{align}

Now we
   apply It\^o's formula to the process $U(X(t), \al(t))$:
\begin{align}\label{eq-Ito-U}
  U& (X(t), \al(t)) =  U(x_{0},\al_{0})   +   \int_{0}^{t} \op U(X(s ),\al(s) ) \d s   + M(t), \end{align}
  where  $M(t) = M_{1}(t) + M_{2}(t) + M_{3}(t)$, and
  \begin{align*} M_{1}(t) & = \int_{0}^t\frac{\langle DV(X(s), \al(s)), \sigma(X(s),\al(s)) \rangle}{V(X(s),\al(s))} \d W(s), \\
   M_{2}(t) & = \int_0^{t}\int_{\R_0^n}\log \frac{V(X(s-)+\gamma(X(s-),\al(s-),z),\al(s-))}{V(X(s-),\al(s-))}\widetilde{N}(\d s,\d z), \\
   M_{3}(t) & = \int_0^{t}\int_{\R}\log \frac{V(X(s-),\al(s-) + h(X(s-), \al(s-), y ) )}{V(X(s-),\al(s-))}\widetilde{N}_{1}(\d s,\d y).
\end{align*}

By the exponential martingale inequality \cite[Theorem 5.2.9]{APPLEBAUM}, for any $k\in \mathbb N$ and $ \theta  \in ( 0, 1)$,  we have
\begin{align*}
 \P  \biggl\{  \sup_{ 0\le t \le k} \biggl[ & M (t) -  \frac{\theta} {2} \int_{0}^{t}\frac{|\langle DV(X(s), \al(s)), \sigma(X(s),\al(s))\rangle|^{2}}{|V(X(s),\al(s))|^{2}}   \d s  \\ & \qquad\qquad
 - f_{1,\theta}(t)- f_{2,\theta}(t) 
        \biggr]
      >  \theta \sqrt k \biggr\}
       \le   e^{-\theta^{2} \sqrt{k}},
\end{align*}  where \begin{align*}
    f_{1,\theta}(t) & =      \frac{1}{\theta} \int_{0}^{t}\int_{\R_{0}^{n} } \biggl[  \biggl(\frac{V(X(s-)+\gamma(X(s-),\al(s-),z),\al(s-))}{V(X(s-),\al(s-))}  \biggr)^{\theta} - 1 \\
       &\qquad  \qquad  - \theta \log  \frac{V(X(s-)+\gamma(X(s-),\al(s-),z),\al(s-))}{V(X(s-),\al(s-))} \biggr]\nu(\d z)\d s, \\
     f_{2,\theta}(t) &=    \frac{1}{\theta}  \int_{0}^{t}\int_{\R } \biggl[  \biggl(\frac{V(X(s-),\al(s-) + h(X(s-), \al(s-), y ) )}{V(X(s-),\al(s-))}  \biggr)^{\theta} - 1 \\
       &\qquad\qquad  -  \theta \log  \frac{V(X(s-),\al(s-) + h(X(s-), \al(s-), y ) )}{V(X(s-),\al(s-))}\biggr] \lambda(\d y) \d s.
      \end{align*}
We can verify that   $\sum_{k}  e^{-\theta^{2} \sqrt{k}}< \infty$. Therefore the Borel-Cantelli lemma implies that there exists an $\Omega_{0} \subset \Omega$ with $\P(\Omega_{0}) =1$ such that for every $\omega \in \Omega_{0}$, there exists an integer $k_{0} = k_{0}(\omega)$ so that for all $k \ge k_{0}$ and   $0 \le t \le k$, we have
\begin{align}\label{eq-M(t)-bound}
  M(t)    \le  &\,  \frac{\theta} {2} \int_{0}^{t}\frac{|\langle DV(X(s), \al(s)), \sigma(X(s),\al(s))\rangle|^{2}}{|V(X(s),\al(s))|^{2}}   \d s   +  \theta \sqrt k + f_{1,\theta}(t) +   f_{2,\theta}(t). \end{align}


 Now putting \eqref{eq-M(t)-bound} and  \eqref{eq-LU-calculation} into \eqref{eq-Ito-U}, it follows that for all $\omega \in \Omega_{0}$ and $0 \le t \le k$, we have
 \begin{align}
\nonumber   U& (X(t), \al(t)) -  U(x_{0},\al_{0})\\
 \nonumber   & \le      \int_{0}^{t} \frac{\op V(X(s), \al(s))}{ V(X(s), \al(s))} \d s - \frac{1 -\theta}{2}     \int_{0}^{t}\frac{|\langle DV(X(s), \al(s)), \sigma(X(s),\al(s))\rangle|^{2}}{|V(X(s),\al(s))|^{2}}   \d s \\
\nonumber      & \quad +   \theta \sqrt k  + f_{1,\theta}(t) +   f_{2,\theta}(t)  +  \int_{0}^{t}   \sum_{j\in \mathcal{M}}q_{\al(s), j}(X(s))\biggl(\log V(X(s),j)-\frac{V(X(s),j)}{V(X(s),\al(s))}\biggr) \d s  \\
\nonumber    &  \quad +  \int_{0}^{t} \int_{\R_0^n}\biggl[\log\frac{V(X(s-) +\gamma(X(s-),\al(s-),z),\al(s-))}{V(X(s-), \al(s-))} + 1  \\
 \nonumber      & \qquad  \qquad \qquad - \frac{V(X(s-) +\gamma(X(s-),\al(s-),z),\al(s-))}{V(X(s-),\al(s-))}\biggr] \nu(\d z) \d s\\
  \label{eq-U(Xalpha(t))}     & \le  \int_{0}^{t} \biggl[c_{2}(\al(s))-  \frac{1 -\theta}{2}c_{3}(\al(s)) -c_{4}(\al(s)) -c_{5}(\al(s))\biggr]\d s+   \theta \sqrt k + f_{1,\theta}(t) +   f_{2,\theta}(t).
\end{align}

Next we argue that for any $t \ge 0$, $ f_{1,\theta}(t) +   f_{2,\theta}(t) \to 0$  as $\theta  \downarrow 0$.
To this end, we first use
  the elementary inequality $e^a\geq a+1$ for $a\in\R$ to obtain \begin{align*}
 \frac{1}{\theta}&  \biggl[  \biggl(\frac{V(x+\gamma(x,i,z),i )}{V(x,i )}  \biggr)^{\theta} - 1
 - \theta \log  \frac{V(x+\gamma(x,i,z),i )}{V(x,i )} \biggr] \ge 0\, \text{ for any }(x,i) \in \R^{n}\times \M.
\end{align*} Next the inequality  $x^r\leq 1+r(x-1)$ for $0\leq r\leq 1$ and  $x >0$ leads to
\begin{align*}
 \frac{1}{\theta}&  \biggl[  \biggl(\frac{V(x+\gamma(x,i,z),i )}{V(x,i )}  \biggr)^{\theta} - 1
 - \theta \log  \frac{V(x+\gamma(x,i,z),i )}{V(x,i )} \biggr] \\
 & \le  \frac{1}{\theta}  \biggl[ 1+ \theta \biggl(\frac{V(x+\gamma(x,i,z),i )}{V(x,i )} -1 \biggr) -1- \theta \log  \frac{V(x+\gamma(x,i,z),i )}{V(x,i )}  \biggr]  \\
 & = \frac{V(x+\gamma(x,i,z),i )}{V(x,i )} -1 - \log  \frac{V(x+\gamma(x,i,z),i )}{V(x,i )};
\end{align*}  notice that the last expression in the above equation is nonnegative thanks to the inequality $a - 1 - \log a \ge  0$ for $a> 0$. Next by virtue of \eqref{eq2-ito-condition},
    we can slightly modify the proof of  Lemma 3.3 in \cite{ApplS-09} to obtain
\begin{align*}
\int_{0}^{t} \int_{\R_{0}^{n} }   \biggl[& \frac{V(X(s-)+\gamma(X(s-),\al(s-),z),\al(s-))}{V(X(s-),\al(s-))}  - 1 \\
  &  - \log   \frac{V(X(s-)+\gamma(X(s-),\al(s-),z),\al(s-))}{V(X(s-),\al(s-))}  \biggr] \nu(\d z) \d s < \infty \; \text{ a.s.}
\end{align*}
 In addition, we can verify that $\lim_{\theta \downarrow 0} [\frac{1}{\theta} (a^{\theta} -1) - \log a] = 0$ for $a > 0$.
 Then   the dominated convergence theorem leads to
  \begin{align*}
\lim_{\theta \downarrow 0} f_{1,\theta}(t)  & = \int_{0}^{t} \int_{\R_{0}^{n}} \lim_{\theta \downarrow 0}  \frac{1}{\theta}   \biggl[  \biggl(\frac{V(X(s-)+\gamma(X(s-),\al(s-),z),\al(s-))}{V(X(s-),\al(s-))}  \biggr)^{\theta} - 1 \\
       &\qquad   \quad - \theta \log  \frac{V(X(s-)+\gamma(X(s-),\al(s-),z),\al(s-))}{V(X(s-),\al(s-))} \biggr]\nu(\d z)\d s = 0.
\end{align*}

On the other hand, using \eqref{eq-q-bdd}, we can readily verify that  \begin{align*}
  \int_{0}^{t}\int_{\R } \biggl[ &  \frac{V(X(s-),\al(s-) + h(X(s-), \al(s-), y ) )}{V(X(s-),\al(s-))}  - 1 \\
       &\qquad  -    \log  \frac{V(X(s-),\al(s-) + h(X(s-), \al(s-), y ) )}{V(X(s-),\al(s-))}\biggr] \lambda(\d y) \d s < \infty\; \text{ a.s. } \end{align*}
Therefore using exactly the same argument as above, we derive $\lim_{\theta \downarrow 0} f_{2,\theta}(t) =0 $.

Now passing to the limit as $\theta \downarrow 0$ in \eqref{eq-U(Xalpha(t))} leads to
\begin{equation}\label{eq-U-prelimit}
U  (X(t), \al(t)) -  U(x_{0},\al_{0})\le  \int_{0}^{t} \bigl[c_{2}(\al(s))-  0.5 c_{3}(\al(s)) -c_{4}(\al(s)) -c_{5}(\al(s))\bigr]\d s
\end{equation} for all $\omega \in \Omega_{0}$, $k\ge k_{0} = k_{0}(\omega) $ and $0 \le t \le k$.
Recall that $U(x,i) = \log V(x,i)$. Then inserting condition (i)  into \eqref{eq-U-prelimit} yields that  for almost all $\omega\in \Omega,$ $k\geq k_0 ,$ and $ k-1\leq t\leq k$, we have
 \begin{align}
\label{eq2-U-prelimit}
\nonumber   \frac{1}{t}&  [p \log|X(t)| + \log c_{1}(\al(t)) ] \le  \frac1t\log V(X(t),\al(t))\\
 \nonumber   & \le \frac{1}{t}  \int_{0}^{t} \bigl[c_{2}(\al(s))-  0.5 c_{3}(\al(s)) -c_{4}(\al(s)) -c_{5}(\al(s))\bigr]\d s  +\frac{\log V(x_{0},\al_{0})}{t}\\
 \nonumber   & \le \frac{1}{t}  \int_{0}^{t} \bigl[c_{2}(\al(s))-  0.5 c_{3}(\al(s)) -c_{4}(\al(s)) -c_{5}(\al(s))\bigr]\d s  +\frac{\log V(x_{0},\al_{0})}{k-1}\\
  & \le \max_{i\in \M}\bigl\{c_{2}(i)-  0.5 c_{3}(i) -c_{4}(i) -c_{5}(i) \bigr \} +\frac{\log V(x_{0},\al_{0})}{k-1};
\end{align} the last inequality yields \eqref{eq1-X-as-exp-stab} by letting $t\to \infty$. 
  \end{proof}

\begin{proof}[Proof of Theorem \ref{thm-moment-as-stable}]
Fix some $(x_{0},\al_{0}) \in \R^{n}_{0}\times \M$ and denote by $(X(t),\al(t))$ the unique solution to \eqref{sw-jump-diffusion}--\eqref{eq:jump-mg}   with initial condition $(X(0),\al(0)) = (x_{0},\al_{0})$. Suppose that for some $\varrho > 0$, we have
\begin{displaymath}
\limsup_{t\to \infty} \frac1 t \log \E[|X(t)|^{p}] \le -\varrho < 0.
\end{displaymath} Then for any $\varrho > \e > 0$, there exists a positive constant $T$ such that $\E[|X(t)|^{p}] \le e^{-(\varrho -\e) t}$   for all $ t \ge T. $ This, together with Lemma 3.1 of \cite{ZhuYB-15}, implies that there exists some positive number $M$ so that \begin{equation}
\label{eq-p-moment-bdd}
\E[|X(t)|^{p}] \le M e^{-(\varrho -\e) t}\; \text{ for all } t \ge 0.
\end{equation}
Let $\delta > 0$. Then we have for any $k \in \mathbb N$,
\begin{equation}
\label{eq0-moment-as}
\begin{aligned}
 \E \biggl[ \supkd |X(t)|^{p}\biggr] &  \le  4^{p}\E \biggl[ |X(\kdel)|^{p} + \supkd \biggl| \int_{\kdel}^{t} b(X(s),\al(s)) \d s\biggr|^{p} \\
     &\qquad + \supkd \biggl| \int_{\kdel}^{t} \sigma(X(s),\al(s)) \d W(s)\biggr|^{p} \\
     & \qquad + \supkd \biggl| \int_{\kdel}^{t}\int_{\R^{n}_{0}} \gamma(X(s-),\al(s-),z) \tilde N(\d s, \d z) \biggr|^{p} \biggr].
\end{aligned}\end{equation}
Using \eqref{0-equilibrium} and \eqref{ito-condition}, we have
\begin{equation}
\label{eq1-moment-as}\begin{aligned}
\E\biggl[ \supkd \biggl| \int_{\kdel}^{t} b(X(s),\al(s)) \d s\biggr|^{p}\biggr] &  \le \E \biggl[\biggl| \int_{\kdel}^{k \delta} |b(X(s),\al(s))| \d s\biggr|^{p} \biggr] \\ & \le (\sqrt \kappa \delta)^{p} \E\biggl[ \supkd |X(t)|^{p}\biggr],
\end{aligned}\end{equation} where $\kappa > 0$ is the constant appearing in \eqref{ito-condition}.
On the other hand, by the Burkhoder-Davis-Gundy inequality (see, e.g., Theorem 2.13 on p. 70 of \cite{MaoY}) and \eqref{0-equilibrium}, \eqref{ito-condition}, we have
\begin{align}
\label{eq2-moment-as}
\nonumber \E    \biggl[ \supkd \biggl| \int_{\kdel}^{t} \sigma(X(s),\al(s)) \d W(s)\biggr|^{p} \biggr]   &  \le C_{p} \E\biggl[ \biggl( \int_{\kdel}^{k \delta} |\sigma(X(s),\al(s))|^{2} \d s \biggr)^{p/2} \biggr]    \\ & \le C_{p} (\kappa\delta)^{p/2} \E\biggl[ \supkd |X(t)|^{p}\biggr],
\end{align}
where  $C_{p}$ is a positive constant depending only on $p$.
Next we use Kunita's first inequality (see, e.g., Theorem 4.4.23 on p. 265 of \cite{APPLEBAUM}), 
\eqref{ito-condition}, \eqref{eq2-ito-condition} and \eqref{eq-pth-moment-as-stable} to estimate
\begin{equation}
\label{eq3-moment-as}
\begin{aligned}
 \E &\biggl[  \supkd \biggl| \int_{\kdel}^{t}\int_{\R^{n}_{0}} \gamma(X(s-),\al(s-),z) \tilde N(\d s, \d z) \biggr|^{p} \biggr] \\
  & \le D_{p} \E\biggl[ \int_{\kdel}^{k\delta} \biggl(\int_{\R_{0}^{n}} |\gamma(X(s-),\al(s-),z)|^{2} \nu(\d z) \biggr)^{p/2} \d s\biggr]\\
  & \quad +  D_{p} \E\biggl[ \int_{\kdel}^{k\delta} \int_{\R_{0}^{n}} |\gamma(X(s-),\al(s-),z)|^{p} \nu(\d z)   \d s\biggr]\\
  & \le D_{p} [ \kappa^{p/2} + \hat \kappa]\delta \E\biggl[ \supkd |X(t)|^{p}\biggr],
\end{aligned}
\end{equation} where $\hat \kappa > 0$ is the constant appearing in \eqref{eq-pth-moment-as-stable} and $D_{p}$  is a positive constant depending only on $p$.

Now we plug \eqref{eq1-moment-as}, \eqref{eq2-moment-as}, and \eqref{eq3-moment-as} into \eqref{eq0-moment-as} to derive
\begin{displaymath}\begin{aligned}
&  \E  \biggl[ \supkd |X(t)|^{p}\biggr] \\ &\   \le 4^{p}\E [ |X(\kdel)|^{p}] + 4^{p}\big((\sqrt \kappa \delta)^{p} +  C_{p} (\kappa\delta)^{p/2}+  D_{p} [\kappa^{p/2} + \hat \kappa]\delta \big) \E\biggl[ \supkd |X(t)|^{p}\biggr].
\end{aligned}\end{displaymath} Now we choose a $\delta > 0$ sufficiently small so that $$4^{p}\big((\sqrt \kappa \delta)^{p} +  C_{p} (\kappa\delta)^{p/2}+  D_{p} [ \kappa^{p/2} + \hat \kappa]\delta \big) < \frac{1}{2}.$$  Then 
 it follows from \eqref{eq-p-moment-bdd} that
\begin{equation}\label{eq-sup X(t)p mean}
 \E\biggl[ \supkd |X(t)|^{p}\biggr] \le 2M  4^{p}   e^{-(\varrho -\e) (k-1)\delta}.
\end{equation}
The rest of the proof uses the same arguments as those in the proof of Theorem 5.9 of \cite{MaoY}. For completeness, we include the details here. Thanks to \eqref{eq-sup X(t)p mean}, we have from the Chebyshev inequality that 
\begin{displaymath}
\P\biggl\{\omega\in \Omega: \supkd |X(t)| > e^{-(\varrho-2\e)(k-1)\delta/p} \biggr\} \le 2M 4^{p} e^{-\e (k-1)\delta}.
\end{displaymath} Then by the Borel-Cantelli lemma, there exists an $\Omega_{0}\subset \Omega$ with $\P(\Omega_{0}) =1$ such that for all $\omega\in \Omega_{0}$, there exists a $k_{0} = k_{0}(\omega) \in \N$ such that  \begin{displaymath}
\supkd |X(t, \omega)|  \le  e^{-(\varrho-2\e)(k-1)\delta/p},\quad \text{ for all } k \ge k_{0}=  k_{0}(\omega).
\end{displaymath} Consequently for all $\omega\in \Omega_{0}$,  if $(k-1 )\delta \le  t \le k\delta$ and $k\ge k_{0}(\omega)$, we have 
\begin{displaymath}
\frac1t \log(|X(t,\omega)|) \le -\frac{(\varrho-2\e)(k-1)\delta}{pt} \le -\frac{(\varrho-2\e)(k-1) }{pk}. 
\end{displaymath} This implies that \begin{displaymath}
\limsup_{t\to\infty}\frac1t \log(|X(t,\omega)|) \le -\frac{\varrho-2\e}{p},  \quad\text{ for all } \omega\in \Omega_{0}.
\end{displaymath}Now letting $\e \downarrow 0$, we obtain that $\limsup_{t\to\infty} \frac{1}{t} \log (|X(t)|) \le -\frac{\varrho}{p}$ a.s. This completes the proof.
\end{proof}

  \begin{proof}[Proof of Proposition \ref{prop-exp-stab-creterion-b}]  We need to find an appropriate Lyapunov function $V$ so that all conditions of Theorem \ref{thm-as-exp-stable} are satisfied. In addition, since $\al$ is an irreducible  continuous-time Markov chain with a stationary distribution $\pi= (\pi_{i},i\in \M)$, the assertion on a.s. exponential stability under \eqref{eq-exp-stab-creterion}  follows from Corollary \ref{coro-as-exp-stable}. To this end, let $G\in \S^{n\times n} $ and $p>0$ be as in the statement of the theorem. 
We consider the Lyapunov function
  $$ V(x,i) =
(h_i-p\zeta_i)(x'G^2 x)\sp {p /2}, \ \ (x,i)\in \R^{n}\times \M ,$$
where $h_i>0$ such that $h_i-p\xi_i>0.$ 
Let us now verify that $V$ satisfies conditions (i)--(v) of Theorem \ref{thm-as-exp-stable}.

  It is readily seen that for each $i\in\M$,
$V(\cdot,i)$ is continuous, nonnegative, and vanishes only at $x=0$. Also observe that condition (i) of Theorem \ref{thm-as-exp-stable} is  satisfied with  $c_{1}(i): = (h_i-p\zeta_i)( \lambda_{\min}(G^{2}))^{\frac{p}{2}}.$ We can verify for $x \neq 0$ that
\begin{align*}
& DV(x,i) =(h_i-p\zeta_i)  p   (x'G^2 x) \sp {p /2 -1} G^{2} x,  \\
& D^{2}V(x,i) = (h_i-p\zeta_i) p   (x'G^2 x) \sp {p /2 -2}[(p  -2)  G^{2} x x' G^{2} + x'G^{2} x G^{2}].
\end{align*}
Then we compute \begin{align}
  \label{eq-L-V-computation}  \lefteqn{ \frac{1}{h_i-p\zeta_i} \op V(x,i) }\\
 \notag     &  = p  (x'G^2 x) \sp {\frac{p}{2} -1}   x' G_i^{2}  A_{i}x     + \frac{1}{2} \tr \big(  p   (x'G^2 x) \sp {\frac{p}{2} -2}[(p  -2)  G^{2} x x' G^{2} + x'G^{2} x G^{2}] B_{i}x x' B_{i}'\big)  \\
   \notag   &  \ \   +\int_{\R_{0}^{n}} \Big[ (x' (I + C_{i}(z) )' G^{2} (I + C_{i}(z)) x)^{\frac{p}{2}} - ( x' G^{2} x)^{\frac{p}{2}}  - p  ( x' G^{2} x)^{\frac{p}{2} -1} x'G^{2}   C_{i}(z) x\Big] \nu(\d z) \\
   \notag   & \ \ + \sum_{j\in \M} q_{ij} \frac{h_j-h_i-p  \zeta_{j}+p\zeta_i}{h_i-p\zeta_i}  (x'G^2 x) \sp {\frac{p}{2} } \\
 \notag   & =  p  (x'G^2 x) \sp {p /2 } \biggl[ \frac{x'  (G^{2}  A_{i} + A_{i}' G^{2} + B_{i}' G^{2} B_{i}) x }{2 x'G^2 x} + (p  -2) \frac{ (x' B_{i}' G^{2} x)^{2}}{2(x'G^2 x)^{2}}   + \frac{1}{p(h_i-p\zeta_i)}\sum_{j\in \M} q_{ij} h_{j}\\
  \notag    & \  \    - \frac{1}{h_i-p\zeta_i}\sum_{j\in \M} q_{ij}\zeta_{j} +\int_{\R_{0}^{n}}  \biggl[ \frac{(x' (I + C_{i}(z) )' G^{2} (I + C_{i}(z)) x)^{\frac{p}{2}}}{ p  (x'G^2 x) \sp {\frac{p}{2} }} - \frac{1}{p } -  \frac{ x'G^{2}   C_{i}(z) x}{x'G^2 x}\biggr]\nu(\d z) \biggr].
\end{align}
Note that \begin{align}
\nonumber \frac{x'  (G^{2}  A_{i} + A_{i}' G^{2} + B_{i}' G^{2} B_{i}) x }{2 x'G^2 x} & = \frac{x' G (G A_{i} G^{-1} + G^{-1} A_{i}' G + G^{-1} B_{i}' G^{2} B_{i} G^{-1}) Gx}{2 | Gx|^{2}} \\
\label{eq1-L-V-computation}  & \le \frac{1}{2}\lambda_{\max}(G A_{i} G^{-1} + G^{-1} A_{i}' G + G^{-1} B_{i}' G^{2} B_{i} G^{-1})= \mu_{i}.
\end{align}
In addition, we have
\begin{equation}\label{eq2-L-V-computation}
\begin{aligned}
\frac{p-2}{2} \biggl(\frac{ x' B_{i}' G^{2} x}{x'G^2 x}\biggr)^{2} & =\frac{p-2}{8} \biggl(\frac{ x'G (G^{-1} B_{i}' G + G B_{i} G^{-1})G x}{x'G^2 x}\biggr)^{2}
 \\   &  \leq  \begin{cases}
\frac{p-2}{8}  \widehat\lambda^2(G^{-1} B_{i}' G+G B_{i} G^{-1} ),     & \text{ if } 0 < p \le 2, \\
 \frac{p-2}{8}   \rho^{2} (G^{-1} B_{i}' G+G B_{i} G^{-1} ),   & \text{ if }p > 2.
\end{cases} \\
& =  \frac{p-2}{8}  \Lambda^{2} (G^{-1} B_{i}' G+G B_{i} G^{-1} ) .
\end{aligned}\end{equation}

 On the other hand, since
\begin{align*}
 \frac{(x' (I + C_{i}(z) )' G^{2} (I + C_{i}(z)) x)^{\frac{p}{2}}}{  (x'G^2 x) \sp {\frac{p}{2} }} &= \biggl( \frac{x' (I + C_{i}(z) )' G^{2} (I + C_{i}(z)) x}{x'G^2 x} \biggr)^{\frac{p}{2}} \\ & \le   \bigl (  {\lambda_{\max}( G^{-1} (I + C_{i}(z) )' G^{2} (I + C_{i}(z))  G^{-1})} \bigr)^{\frac{p}{2}},\end{align*}
 and \begin{displaymath}
\frac{ x'G^{2}   C_{i}(z) x}{x'G^2 x} = \frac{x' G (G C_{i}(z) G^{-1} + G^{-1} C_{i}'(z) G) Gx}{2 |Gx|^{2}} \ge  \frac{1}   {2} \lambda_{\min} (G C_{i}(z) G^{-1} + G^{-1} C_{i}'(z) G),
\end{displaymath}
 it follows that  \begin{align}
 \nonumber & \int_{\R_{0}^{n}}    \biggl[ \frac{(x' (I + C_{i}(z) )' G^{2} (I + C_{i}(z)) x)^{\frac{p}{2}}}{ p  (x'G^2 x) \sp {\frac{p}{2} }} - \frac{1}{p } -  \frac{ x'G^{2}   C_{i}(z) x}{x'G^2 x}\biggr]\nu(\d z) \\
\nonumber   &\ \le  \frac{1}{p } \int_{\R_{0}^{n}}   \biggl[  \bigl (  {\lambda_{\max}( G^{-1} (I + C_{i}(z) )' G^{2} (I + C_{i}(z)) G^{-1}}\bigr)\bigr)^{\frac{p }{2}}   \\
 \nonumber      & \qquad \qquad \qquad
-  1 -  \frac{p}{2}  \lambda_{\min} (G C_{i}(z) G^{-1} + G^{-1} C_{i}'(z) G)\biggr] \nu(\d z)\\
\label{eq3-L-V-computation}   & \ = \frac{\rho_{i}}{p }.
\end{align}
Then upon putting the estimates \eqref{eq1-L-V-computation}--\eqref{eq3-L-V-computation} into \eqref{eq-L-V-computation}, we have \begin{align*}
  \op V(x,i)  &    \le  (h_i-p  \zeta_{i}) (x'G^2 x) \sp {\frac{p}{2} }\cdot\biggl[ p\mu_i
  +\frac{p-2}8{\Lambda}^2(GB_iG^{-1}+G^{-1}B_i'G)\\ 
  & \qquad  \qquad \qquad\qquad \qquad \qquad  + \frac{1}{h_i-p\zeta_i}\sum_{j\in \M} q_{ij} h_{j}- \frac{p}{h_i-p\zeta_i}\sum_{j\in \M} q_{ij}\zeta_{j}+ \eta_{i}\biggr]
 \\
    & = V(x,i) \biggl[  p\mu_i+\frac{p-2}8 {\Lambda}^2(GB_iG^{-1}+G^{-1}B_i'G) \\
    & \qquad  \qquad \qquad \qquad  \qquad \qquad  + \frac{1}{(h_i-p\zeta_i)}\sum_{j\in \M} q_{ij} h_{j} - \frac{p}{h_i-p\zeta_{i}} (\mu_{i}+\beta) + \eta_{i} \biggr],
    \end{align*}
    where we used \eqref{eq-com-0} to derive the last step. Thus condition (ii) of Theorem \ref{thm-as-exp-stable} is satisfied with $c_{2}(i) =  p\mu_i +\frac{p-2}8{\Lambda}^2(GB_iG^{-1}+G^{-1}B_i'G)- \frac{p}{h_i-p\zeta_{i}} (\mu_{i}+\beta)+ \frac{1}{(h_i-p\zeta_i)}\sum_{j\in \M} q_{ij} h_{j}+ \eta_{i}$.
In view of
\[
\aligned
|\langle DV(x,i),B_ix\rangle|^2= & \ p^2V^2(x,i)\biggl(\frac{x'G^2B_ix}{x'G^2x}\biggr)^2= \frac{p^{2}}{4}V^2(x,i)\biggl(\frac{x'G(GB_i G^{-1}+ G^{-1}B_{i}' G ) Gx}{x'G^2x}\biggr)^2 \\
\geq &\  \frac{ p^2}4\widehat{\lambda}^2(GB_iG^{-1}+G^{-1}B_i'G)V^2(x,i),
\endaligned
\]

  Note that
    \begin{align*}
&\int_{\R_{0}^{n}}  \biggl[\log\left(\frac{V(x+C_i(z)x,i)}{V(x,i)}\right)-\frac{V(x+C_i(z)x,i)}{V(x,i)}+1\biggr]\nu(\d z)\\
 & \  = \int_{\R_{0}^{n}}  \biggl[\frac{p}{2}\log\frac{x' (I+ C_{i}(z))' G^{2} (I + C_{i}(z))x}{ x' G^{2}x }-\frac{(x' (I+ C_{i}(z))' G^{2} (I + C_{i}(z))x)^{\frac{p}{2}}}{(x' G^{2}x)^{\frac{p}{2}} }+1\biggr]\nu(\d z)\\
 & \ \le 0\wedge \int_{\R_{0}^{n}}   \biggl[\frac{p}{2}\log{\lambda_{\max}(G^{-1}(I+ C_{i}(z))' G^{2} (I + C_{i}(z))G^{-1})}  \\ & \qquad \qquad- \bigl(  \lambda_{\min}(G^{-1}(I+ C_{i}(z))' G^{2} (I + C_{i}(z))G^{-1})\bigr)^{\frac{p}{2}}+1\biggr]\nu(\d z)\\
 & \ = -c_{4}(i).
\end{align*}
This establishes Condition (iv). Likewise, we can verify condition  (v) as follows.
\begin{align*}
 &\sum_{j\in \M} q_{ij} \left(\log V(x,j)-\frac{V(x,j)}{V(x,i)}\right) \\
 & \ \ = \sum_{j\in \M} q_{ij} \left(\log (h_j- p \zeta_{j})  +\frac{p}{2}\log (x'G^{2}x) - \frac{h_j-p \zeta_{j}}{h_i- p \zeta_{i}}\right) \\
 & \ \ = \sum_{j\in \M} q_{ij} \left(\log (h_j- p \zeta_{j})    - \frac{h_j-p \zeta_{j}}{h_i- p \zeta_{i}}\right)  = - c_{5}(i).
\end{align*}

Thus we have verified all conditions of Theorem \ref{thm-as-exp-stable} and hence in view of   Corollary \ref{coro-as-exp-stable}, \eqref{eq-exp-stab-creterion} implies the desired conclusion.
    \end{proof}
    
    \begin{proof}[Proof of Proposition \ref{prop-moment-exp-stab-1d}] This proof is motivated by the proofs of Theorem 5.24 in \cite{MaoY} and Theorem 3.3 in
\cite{Zong-14}.  As in the proof of Proposition \ref{prop-as-exp-stable-1d}, let us assume $x(0) = x \neq 0$ and $\al(0) = i \in \M$.
Then by the It\^o formula, we have
 \begin{align*}& |x(t)|^{p} \\
   &  \  = |x|^{p} \exp\biggl\{\int_0^t\biggl[pa(\alpha(s))-\frac{p}{2}b^2(\alpha(s))
 +p\int_{\R_0}[\log|1+c(\alpha(s-),z)|-c(\alpha(s-),z)]\nu(\d z)\biggr]\d s
   \\ &  \qquad \qquad  \qquad
   +   \int_0^t p b(\alpha(s))\d W(s)  +  \int_0^t\int_{\R_0}p \log|1+c(\alpha(s-),z)|\widetilde{N}(\d s,\d z) \biggr\}\\
     & \ = |x|^{p}
        \exp\biggl\{\int_0^t f(\al(s)) \d s\biggr\} \mathcal E (t),       \end{align*}
     where
  $ \mathcal E (t)   : =  \exp \{ g(t) \}$ with
   \begin{align*}
 g(t)  &   =
            \int_0^t p b(\alpha(s))\d W(s)  - \frac{1}{2} \int_{0}^{t} p^{2} b^{2}(\al(s))\d s                                  +  \int_0^t\int_{\R_0}p \log|1+c(\alpha(s-),z)|\widetilde{N}(\d s,\d z) \\
                                 &   \qquad\qquad -\int_{0}^{t}\int_{\R_{0}}[|1+ c(\al(s-),z)|^{p}-p \log|1+ c(\al(s-),z)| -1] \nu(\d z)\d s.
 \end{align*}

 For each $t \ge 0$, let $\G_{t}: =\sigma(\al(s): 0\le s \le t)$, $\G:=\bigvee_{t\ge 0} \G_{t}$, and $\mathcal H_{t}: = \sigma(W(s), N([0,s)\times A), 0\le s \le t, A \in \B(\R_{0}))$.  Denote $\mathcal D_{t}: = \G \bigvee \mathcal H_{t}$.  Let $\{ \tau_{k}, k =1,2, \dots\}$ denote the sequence of switching times for the continuous-time Markov chain $\al(\cdot)$; that is, we define  $\tau_{1}: = \inf\{t \ge 0: \al(t) \neq \al(0) \} $ and $\tau_{k+1} : = \inf\{t \ge \tau_{k}: \al(t) \neq \al(\tau_{k}) \}$ for $k =1, 2,\dots$  It is well-known that $\tau_{k}\to \infty$ a.s. as $k\to \infty$.
Write     $\tau_{0}: =0$.
Then we can compute
\begin{align*}
\E[ |x(t)|^{p}] & = \E \Biggl[|x|^{p}  \sum_{k=0}^{\infty}I_{ \{\tau_{k}\le t < \tau_{k+1}\}} \exp\set{\int_{0}^{t} f(\al(s)) \d s} \EE(t) \Biggr]\\
 & =  \sum_{k=0}^{\infty}  \E \biggl[ |x|^{p}  \E\biggl[  I_{ \{\tau_{k}\le t < \tau_{k+1}\}}\exp\set{\int_{0}^{t} f(\al(s)) \d s} \EE(t)  \Big| \mathcal D_{\tau_{k}}\biggr]\biggr]\\
 & = \sum_{k=0}^{\infty}  \E \biggl[  |x|^{p} I_{ \{\tau_{k}\le t < \tau_{k+1}\}}\exp\set{\int_{0}^{t} f(\al(s)) \d s} \EE(\tau_{k}) \E\bigl[ \exp\{ g(t) - g(\tau_{k})\} \big |  \mathcal D_{\tau_{k}}\bigr]  \biggr].
\end{align*}
Note that on the event $ \{\tau_{k}\le t < \tau_{k+1}\}$, we have
 \begin{align*}
 g(t) -g(\tau_{k}) &   =
            \int_{\tau_{k}}^t p b(\alpha(\tau_{k}))\d W(s)  - \frac{1}{2} \int_{\tau_{k}}^{t} p^{2} b^{2}(\al(\tau_{k}))\d s                     \\
                            & \qquad             +  \int_{\tau_{k}}^t\int_{\R_0}p \log|1+c(\al(\tau_{k}-),z)|\widetilde{N}(\d s,\d z) \\
                                 &   \qquad  -\int_{\tau_{k}}^{t}\int_{\R_{0}}[|1+ c(\al(\tau_{k}-),z)|^{p}-p \log|1+ c(\al(\tau_{k}-),z)| -1] \nu(\d z)\d s.
 \end{align*}
 Then it follows from  the definition of the $\sigma$-algebra $\mathcal D_{\tau_{ k}}$ and Corollary 5.2.2 of \cite{APPLEBAUM} that $ \E\bigl[ \exp\{ g(t) - g(\tau_{k})\}  |  \mathcal D_{\tau_{k}}\bigr]  =1$. Consequently, we have
 \begin{align*}
\E& [ |x(t)|^{p}] \\ & = \sum_{k=0}^{\infty}  \E \biggl[ |x|^{p} I_{ \{\tau_{k}\le t < \tau_{k+1}\}}\exp\set{\int_{0}^{t} f(\al(s)) \d s} \EE(\tau_{k}) \biggr] \\
 & =  \sum_{k=0}^{\infty}  \E \biggl[\E \biggl[ |x|^{p} I_{ \{\tau_{k}\le t < \tau_{k+1}\}}\exp\set{\int_{0}^{t} f(\al(s)) \d s} \EE(\tau_{k}) \Big | \mathcal D_{\tau_{k-1}} \biggr] \biggr] \\
 & =   \sum_{k=0}^{\infty}  \E \biggl[|x|^{p} I_{ \{\tau_{k}\le t < \tau_{k+1}\}}\exp\set{\int_{0}^{t} f(\al(s)) \d s} \EE(\tau_{k-1}) \E [\exp\{g(\tau_{k}) - g(\tau_{k-1}) |  \mathcal D_{\tau_{k-1}} \} ] \biggr].
\end{align*} As argued before, we have $ \E [\exp\{g(\tau_{k}) - g(\tau_{k-1}) |  \mathcal D_{\tau_{k-1}} \} ] =1$ and hence
\begin{displaymath}
\E[ |x(t)|^{p}] =   \sum_{k=0}^{\infty}  \E \biggl[ |x|^{p} I_{ \{\tau_{k}\le t < \tau_{k+1}\}}\exp\set{\int_{0}^{t} f(\al(s)) \d s} \EE(\tau_{k-1}) \biggr].
\end{displaymath}
Continue in this fashion and we derive that \begin{align*}
\E[ |x(t)|^{p}] & =   \sum_{k=0}^{\infty}  \E \biggl[ |x|^{p} I_{ \{\tau_{k}\le t < \tau_{k+1}\}}\exp\set{\int_{0}^{t} f(\al(s)) \d s} \EE(\tau_{0}) \biggr] \\
    & =  \sum_{k=0}^{\infty}  \E \biggl[|x|^{p}  I_{ \{\tau_{k}\le t < \tau_{k+1}\}}\exp\set{\int_{0}^{t} f(\al(s)) \d s}   \biggr] \\
    & =    \E \biggl[ |x|^{p}\exp\set{\int_{0}^{t} f(\al(s)) \d s}   \biggr].
\end{align*}
Then it follows \eqref{eq-Lambda(a)} that
\begin{align*}
 \lim_{t\to \infty} \frac{1}{t}\log(\E[ |x(t)|^{p}] ) & =   \lim_{t\to \infty} \frac{1}{t} \log(|x|^{p}) +  \lim_{t\to \infty} \frac{1}{t} \log\bigg(\E\bigg[\exp\bigg \{ \int_{0}^{t} f(\al(s)) \d s\bigg\}\bigg]\bigg)
  = \Upsilon(f).
\end{align*}
This completes the proof.
\end{proof}

    \begin{proof}[Proof of Proposition \ref{prop-as-moment-stab-M-matrix}]
In view of  Theorem \ref{thm-moment-stab-m-matrix}, we only need to verify Assumption \ref{assump-m-matrix} for the positive definite matrix $G^{2}$. But as observed in the proof of Proposition \ref{prop-exp-stab-creterion-b},  we have \begin{align*}
 \lan G^{2}x, A_{i}x\ran   +\frac12\lan B_{i}x,G^{2}B_{i}x\ran \le\frac12 \lambda_{\max}(GA_{i}G^{-1} +G^{-1}A_{i}G + G^{-1}B_{i}' G^{2} B_{i}G^{-1})  \lan x, G^{2}x\ran,         
\end{align*} and \eqref{eq3-L-V-computation} shows that  \begin{align*}
 \nonumber & \int_{\R_{0}^{n}}    \biggl[ \frac{(x' (I + C_{i}(z) )' G^{2} (I + C_{i}(z)) x)^{\frac{p}{2}}}{    (x'G^2 x) \sp {\frac{p}{2} }} - 1 - p \frac{ x'G^{2}   C_{i}(z) x}{x'G^2 x}\biggr]\nu(\d z)  \le  \eta_{i}.
 \end{align*}Finally we observe that \begin{displaymath}
\frac{(\lan x, G^{2}B_{i}x\ran)^{2}}{(\lan x, G^{2} x\ran)^{2}} = \frac14 \biggl(\frac{x' G (G^{-1} B_{i}'G + GB_{i}G^{-1})G x}{x'G^{2}x} \biggr)^{2} \le \frac14 \rho^{2} (G^{-1} B_{i}'G + GB_{i}G^{-1}). 
\end{displaymath} Therefore we have verified \eqref{eq1-moment-stable-M}--\eqref{eq3-moment-stable-M} of Assumption \ref{assump-m-matrix}. Then the assertions of the proposition follows from Theorem \ref{thm-moment-stab-m-matrix} directly.   
\end{proof}

\bibliographystyle{apalike}

\end{document}